%
%
%
%

\documentclass[12pt]{amsart}
\usepackage{amscd}
\usepackage{amssymb}






\addtolength{\footskip}{.3in} \addtolength{\oddsidemargin}{-.5in}
\addtolength{\evensidemargin}{-.5in} \addtolength{\textwidth}{1in}

\allowdisplaybreaks[1]

\newtheorem{thm}{Theorem}[section]

\newtheorem{lem}[thm]{Lemma}
\newtheorem{prop}[thm]{Proposition}

\newtheorem{defn}[thm]{Definition}

\theoremstyle{definition}

\numberwithin{equation}{section}


\newcommand{\resumename}{R\'esum\'e}


\begin{document}

\date{\today}

\title[Pinczon algebras]{Pinczon algebras}
\author[D. Arnal]{Didier Arnal}
\address{
Institut de Math\'ematiques de Bourgogne\\
UMR CNRS 5584\\
Universit\'e de Bourgogne\\
U.F.R. Sciences et Techniques
B.P. 47870\\
F-21078 Dijon Cedex\\France}
\email{Didier.Arnal@u-bourgogne.fr}

\dedicatory{
\vskip 1cm
\hskip 6cm To my friend Georges:\\
\hskip 5.5cm Forty years ago, we discover together Mathematics...\\
\hskip 4cm And the existence of the trilinear form $I$ for semisimple Lie algebras.\\
\vskip 0.5cm}

\begin{abstract}
After recalling the construction of a graded Lie bracket on the space of cyclic multilinear forms on a vector space $V$, due to Georges Pinczon and Rosane Ushirobira, we prove this construction gives a structure of quadratic associative algebra, up to homotopy, on $V$. In the associative case, it is easy to refind the associated usual Hochshild cohomology. By considering restriction to a subspace or a quotient space of forms, we can present in a completely similar way the cases of quadratic commutative and quadratic Lie algebras, up to homotopy, and the corresponding Harrison and Chevalley cohomologies.\\
\end{abstract}

\keywords{Quadratic algebras, Cohomology, Graded Lie algebras}
\subjclass[2000]{17A45, 16E40, 17BXX, 17B63}


\maketitle


\section{Introduction}


In recent papers \cite{PU,DPU}, see also \cite{D,MPU}, Georges Pinczon and Rosane Ushirobira introduced what they called a Poisson bracket on the space of forms on a finite dimensional vector space $V$, equipped with a symmetric, non degenerated, bilinear form $b$. If $(e_i)$ is a basis in $V$ and $(e'_j)$ the basis defined by the relations $b(e'_j,e_i)=\delta_{ij}$, the bracket is:
$$
\{\alpha,\beta\}=\sum_i\iota_{e_i}\alpha\wedge\iota_{e'_i}\beta.
$$
Especially, if $\alpha$ is a $(k+1)$-form, and $\beta$ a $(k'+1)$ one, then $\{\alpha,\beta\}$ is a $(k+k')$-form.\\

In fact, the authors proved that a structure of quadratic Lie algebra $(V,[~,~],b)$ on $V$ is completely characterized by a 3-form $I$, such that $\{I,I\}=0$. The relation between the Lie bracket and $I$ is simply:
$$
I(x,y,z)=b([x,y],z),
$$
and the equation $\{I,I\}=0$ is the structure equation. A direct consequence of this construction is the existence of a cohomology on the space of forms, given by:
$$
d\alpha=\{\alpha,I\}.
$$
It is easy to characterize the problem of definition and deformation of quadratic Lie algebra structures on $(V,b)$ with the use of this cohomology on forms.\\

In this paper, we shall just give a generalization of this construction, defining what we call the Pinczon bracket on the space of cyclic multilinear forms on $V$. More precisely, as usual (see \cite{NR}), the structure equation is presented after a shift on degree. Thus, $V$ is now a graded vector space, the space $\mathcal C(V)$ of cyclic multilinear forms on the shifted space $V[1]$ is spanned by the $k$-linear forms (with an usual rule for signs):
$$
(\beta_1\otimes\ldots\otimes\beta_k)^{Cycl}=\sum_j\beta_j\otimes\ldots\beta_k\otimes\beta_1\otimes\ldots\otimes\beta_{j-1},
$$
where the $\beta_j$ are linear forms.

As a slight generalization of \cite{PU}, let us define a Pinczon bracket on $\mathcal C(V)$ as a (graded) skewsymmetric bilinear operator $(\Omega,\Omega')\mapsto\{\Omega,\Omega'\}$ such that, $\{\Omega,\Omega'\}$ is $(k+k')$-linear, if $\Omega$ is $(k+1)$-linear, and $\Omega'$ $(k'+1)$-linear. We moreover suppose that, if $\alpha$ is a linear form, $\{\alpha,~\}$ is a `derivation':
$$
\{\alpha,(\beta_1\otimes\ldots\otimes\beta_k)^{Cycl}\}=\sum_j\{\alpha,\beta_j\}(\beta_1\otimes\ldots\hat{\beta_j}\ldots\otimes\beta_k)^{Cycl},
$$
and defines a (graded) Lie algebra structure on $\mathcal C(V)$, whose center is the space of 0-forms.\\

There is a one-to-one correspondence between the set of Pinczon brackets on $\mathcal C(V)$ and the space of bilinear symmetric forms $b$ on $V$, and the correspondence is given through the Pinczon-Ushirobira formula:
$$
\{\Omega,\Omega'\}=\sum_i\left(\iota_{e_i}\Omega\otimes\iota_{e'_i}\Omega'\right)^{Cycl}.
$$ 

On the other hand, it is well known that the space $\bigotimes^+ V[1]$ is a cogebra for the comultiplication given by the deconcatenation map:
$$
\Delta(x_1\otimes\ldots\otimes x_k)=\sum_{i=1}^{k-1}(x_1\otimes\ldots\otimes x_i)\bigotimes(x_{i+1}\otimes\ldots\otimes x_k).
$$
Moreover, the coderivations of $\Delta$ are characterized by their Taylor series $(Q_k)$ where $Q_k:\bigotimes^kV[1]\longrightarrow V[1]$, and the bracket of such coderivation is still a coderivation (see for instance \cite{BGHHW}).\\

If $B$ is the bilinear form defined on $V[1]$, which corresponds to $b$, there is a bijective map between the cyclic forms $\Omega$ and what we call quadratic coderivation $Q$, given by the formula:
$$
\Omega_Q(x_1,\ldots,x_{k+1})=B(Q(x_1,\ldots,x_k),x_{k+1}).
$$
This relation is an isomorphism of Lie algebras: $\{\Omega_Q,\Omega_{Q'}\}=\Omega_{[Q,Q']}$.\\

With this construction, the notion of quadratic associative algebra, respectively associative quadratic algebra up to homotopy does coincide with the notion of Pinczon algebra structure on $\mathcal C(V)$. This gives also an explicit way to refind the Hochschild cohomology defined by the algebra structure.\\

For any Pinczon bracket, the subspace $\mathcal C_{vsp}(V)$ of cyclic forms, vanishing on shuffle products is a Lie subalgebra. The restriction to this subalgebra of the above construction gives us the notion of quadratic commutative algebra (up to homotopy): it is a Pinczon algebra structure on $\mathcal C_{vsp}(V)$. Similarly, one refinds the Harrison cohomology of commutative algebras.\\

A natural quotient of $\mathcal C(V)$ is the space $\mathcal C(V)|_S$ of totally symmetric multilinear forms on $V[1]$. Any Poisson bracket on $\mathcal C(V)$ induces by symmetrization a bracket on $\mathcal C(V)|_S$, denoted $\{~,~\}|$. This allows us to refind the notion of quadratic Lie algebra (up to homotopy), and the corresponding Chevalley cohomology.\\


\section{Cyclic forms}

\subsection{Koszul's rule}

\

In this paper, we consider a finite dimensional graded vector space $V$, on a charateristic 0 field. Denote $|x|$ the degree of a vector $x$ in $V$. As usual, $V[1]$ is the space $V$ with a shifted degree. If $x$ is homogeneous, its degree in $V[1]$ is
$$
\deg(x)=|x|-1.
$$
Note simply $x$ for $\deg(x)$ when no confusion is possible. Very generally, we use small letters for mapping defined on $V$, and capital letter for `corresponding' mapping, defined on $V[1]$. Let us define now these corresponding mappings.\\

For any real number $a$, define a `$k$-cocycle' $\eta_a$ by putting:
$$
\eta_a(x_1,\ldots,x_k)=(-1)^{\sum_{j\leq k}(a-j)x_j}.
$$
For any permutation $\sigma$ in $\mathfrak S_k$, define $\varepsilon_{|x|}(\sigma)$ as the sign of $\sigma$, taking in account only the positions of the $x_j$ with odd degree $|~|$, and $\varepsilon_{\deg(x)}(\sigma)=\varepsilon_x(\sigma)$ is the sign of the permutation $\sigma$, taking in account only the positions of the $x_j$ with odd degree $\deg$. Let $\varepsilon(\sigma)$ be the sign of $\sigma$. Then a direct computation shows (see for instance \cite{AAC1}):
$$
\eta_a(x_{\sigma(1)},\ldots,x_{\sigma(k)})\eta_a(x_1,\ldots,x_k)=\varepsilon(\sigma)\varepsilon_{|x|}(\sigma)\varepsilon_x(\sigma).
$$

If $q$ is a $k$-linear mapping from $V^k$ into a graded vector space $W$, define the associated $k$-linear mapping from $V[1]^k$ into $W[1]$, with $\deg(Q)=|q|+k-1$, by
$$
Q(x_1,\ldots,x_k)=\eta_k(x_1,\ldots,x_k)q(x_1,\ldots,x_k).
$$

If $W$ is the base field $\mathbb K$, we generally prefer to keep the 0 degree for scalars. If $\alpha$ is a $k$-linear form on $V$, we associate to $\alpha$ a form $A$, on $V[1]$, with degree $\deg(A)=|A|+k$.\\ 

Let us now recall the sign rule due to Koszul (\cite{K}). For any relation using letters representing graded objects, in which the ordering of the letters is modified in some terms, it is always understood that, in front of the first term, there is a + sign and in front of every other term, there is an (implicit) sign $\varepsilon_{letters}(\sigma)$ where $\sigma$ is the permutation of the letters between the first term and the other.\\

For instance, the relation:
$$
F(A(x,y),z)=A(F(x,z),y)+F(x,A(y,z))
$$
means in an explicit form:
$$
F(A(x,y),z)=(-1)^{AF+yz}A(F(x,z),y)+(-1)^{Ax}F(x,A(y,z)).
$$

\subsection{Cyclic product.}

\

\begin{defn}

\

If $\Omega$ is a $(k+1)$-linear form, let us say that $\Omega$ is a {\it cyclic} form on $V[1]$, if it satisfies (with the above rule):
$$
\Omega(x_{k+1},x_1,\ldots,x_k)=\Omega(x_1,\ldots,x_{k+1}).
$$

Denote $\mathcal C(V)$ the space of cyclic forms on $V[1]$.\\
\end{defn}

For any $k+1$-linear form $\Omega$, define the action of $\sigma$ in $\mathfrak S_{k+1}$ on $\Omega$ as:
$$
(\Omega^\tau)(x_1,\ldots,x_{k+1})=\Omega(x_{\tau^{-1}(1)},\ldots,x_{\tau^{-1}(k+1)}).
$$
Then the cyclic form $\Omega^{Cycl}$ is as follows: if $Cycl$ is the subgroup of $\mathfrak S_{k+1}$ generated by the cycle $(1,2,\ldots,k+1)$,
$$
\Omega^{Cycl}=\sum_{\tau\in Cycl}\Omega^\tau.
$$

Thanks to this operator, it is easy to `cyclicize' the tensor product of multilinear forms:\\

\begin{defn}

\

We define the cyclic product of multilinear forms $A$, $B$ on $V[1]$ as:
$$
A\odot B=(A\otimes B)^{Cycl}.
$$
\end{defn}

\begin{lem}

\

The cyclic product is (graded) commutative, but non associative.\\
\end{lem}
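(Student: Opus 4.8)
The plan is to separate the two claims. For graded commutativity I would exploit two elementary facts about the operator $(\cdot)^{Cycl}$. First, for forms $A$ of arity $a$ and $B$ of arity $b$, the tensor monomial $B\otimes A$ is the image of $A\otimes B$ under the cyclic permutation $\tau\in Cycl\subset\mathfrak S_{a+b}$ that interchanges the two adjacent blocks of $a$ and $b$ slots; hence $B\otimes A=(A\otimes B)^{\tau}$ up to the Koszul sign attached to $\tau$. Second, $(\cdot)^{Cycl}$ is invariant under precomposition by any $\tau\in Cycl$: since $(\Omega^{\sigma})^{\sigma'}=\Omega^{\sigma\sigma'}$ and $Cycl$ is a group, reindexing gives $(\Omega^{\tau})^{Cycl}=\sum_{\sigma\in Cycl}\Omega^{\tau\sigma}=\sum_{\sigma'\in Cycl}\Omega^{\sigma'}=\Omega^{Cycl}$. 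Combining the two, $B\odot A=(B\otimes A)^{Cycl}=(A\otimes B)^{Cycl}=A\odot B$ up to the Koszul sign governed by the degrees of $A$ and $B$, which is exactly graded commutativity in Koszul's convention.

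For non-associativity it suffices to exhibit one example, and the most transparent one takes $A$ of arity $2$ and $B,C$ of arity $1$. I would expand both triple products using $A\odot B=(A\otimes B)^{Cycl}$ and then regroup the resulting tensor monomials into cyclic-equivalence classes, recalling that two monomials in four arguments define the same cyclic form precisely when their letter-words are cyclic rotations of one another (up to sign). Associating on the right gives $A\odot(B\odot C)=(A\otimes(B\otimes C)^{Cycl})^{Cycl}$, in which the two slots of $A$ stay adjacent in every monomial; it is therefore the sum of the two classes in which $A$ is an intact block, namely $[A\,A\,B\,C]$ and $[A\,A\,C\,B]$.

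The decisive point is that associating on the left, $(A\odot B)\odot C=((A\otimes B)^{Cycl}\otimes C)^{Cycl}$, first cyclicizes $A\otimes B$ at arity $3$ and only afterwards re-cyclicizes against $C$ at arity $4$. Among the three monomials so produced, one places the two slots of $A$ at non-adjacent positions of the $4$-cycle, separated by $B$ and $C$, giving an extra class $[A\,B\,A\,C]$ that no rotation can return to an ``$A$ intact'' word. Hence $(A\odot B)\odot C-A\odot(B\odot C)$ equals this single ``split-$A$'' class, which is nonzero for, say, $\dim V[1]\ge4$ with $A=e^1\otimes e^2$ and $B=e^3$, $C=e^4$ for distinct dual basis vectors, since the underlying tensor monomials are then linearly independent.

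I expect the genuine work to be sign bookkeeping rather than structure. In the commutativity step one must check that the Koszul sign produced by interchanging the blocks $A$ and $B$ coincides with the sign carried by the cyclic shift $\tau$, so that nothing spurious survives after applying $(\cdot)^{Cycl}$; and in the non-associativity step one must reason at the level of cyclic classes rather than simply counting tensor terms, to be certain that the extra split-$A$ contribution does not cancel against the others.
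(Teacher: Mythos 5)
Your proof is correct, and it splits the same way the paper's does, but only the first half follows the same route. For graded commutativity your argument is exactly the paper's: it exhibits the block-swap permutation
$\sigma=\left(\begin{smallmatrix}1&\ldots&p&p+1&\ldots&p+q\\ q+1&\ldots&p+q&1&\ldots&q\end{smallmatrix}\right)$,
observes $\sigma\in Cycl$ and $B\otimes A=(A\otimes B)^{\sigma}$, and absorbs $\sigma$ into the sum over the group $Cycl$ — your two ``elementary facts'' verbatim, with the sign bookkeeping likewise delegated to the implicit Koszul convention. For non-associativity the routes genuinely differ: the paper evaluates a concrete $6$-dimensional example, $A=\epsilon_1\odot\epsilon_2$, $B=\epsilon_3\odot\epsilon_5$, $\Gamma=\epsilon_6\odot\epsilon_4$, on $(e_1,\ldots,e_6)$, getting $1$ for $\left((A\odot B)\odot\Gamma\right)$ and $0$ for $\left(A\odot(B\odot\Gamma)\right)$, whereas you take arities $(2,1,1)$ and argue structurally by decomposing both triple products into cyclic classes. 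Your bookkeeping checks out: with $A=\epsilon_1\otimes\epsilon_2$, $B=\epsilon_3$, $C=\epsilon_4$ (and, say, $V[1]$ concentrated in even degree so all Koszul signs are $+1$), $A\odot(B\odot C)$ is the sum of the rotation classes of the words $\epsilon_1\epsilon_2\epsilon_3\epsilon_4$ and $\epsilon_1\epsilon_2\epsilon_4\epsilon_3$, while $(A\odot B)\odot C$ contains these two plus the split class $(\epsilon_2\otimes\epsilon_3\otimes\epsilon_1\otimes\epsilon_4)^{Cycl}$, which is nonzero (evaluate at $(e_2,e_3,e_1,e_4)$). Your version buys a smaller example (dimension $4$ rather than $6$) together with an explanation of \emph{why} associativity fails — the second cyclicization destroys the block structure of $A$ — while the paper's buys a one-line numerical verification. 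One small repair is worth making: your $A=\epsilon_1\otimes\epsilon_2$ is not itself cyclic, so as stated your counterexample shows non-associativity of $\odot$ on all multilinear forms rather than on $\mathcal C(V)$, where the lemma is used; taking instead $A=\epsilon_1\odot\epsilon_2$ as in the paper, bilinearity shows the associator equals $(\epsilon_2\otimes\epsilon_3\otimes\epsilon_1\otimes\epsilon_4)^{Cycl}+(\epsilon_1\otimes\epsilon_3\otimes\epsilon_2\otimes\epsilon_4)^{Cycl}$, and since the index words $2314$ and $1324$ are not cyclic rotations of one another the two split classes cannot cancel, so your argument goes through unchanged.
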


\begin{proof}

Suppose $A$ is a $p$-linear cyclic form and $B$ a $q$-linear one. Then, $A\odot B$ is a cyclic $p+q$-form. Consider the permutation:
$$
\sigma=\left(\begin{matrix}1&\ldots&p&p+1&\ldots&p+q\\
q+1&\ldots&p+q&1&\ldots&q\end{matrix}\right),
$$
then $\sigma\in Cycl$ and $B\otimes A=(A\otimes B)^\sigma$. Therefore
$$
B\odot A=\sum_{\tau\in Cycl}(B\otimes A)^\tau=\sum_{\tau\in Cycl}(A\otimes B)^\tau=A\odot B.
$$

To see the product is non associative, suppose $V$ is 6 dimensional, with a basis $(e_i)$. Denote $\epsilon_i$ the dual basis, and consider the symmetric bilinear forms:
$$
A=\epsilon_1\odot \epsilon_2,\quad B=\epsilon_3\odot \epsilon_5,\quad \Gamma=\epsilon_6\odot \epsilon_4.
$$
Then a direct computation gives
$$
\left((A\odot B)\odot \Gamma\right)(e_1,\ldots,e_6)=1,\quad\text{and}\quad\left(A\odot (B\odot \Gamma)\right)(e_1,\ldots,e_6)=0.
$$
\end{proof}

\subsection{Pinczon bracket}

\

\begin{defn}

\

A Pinczon bracket on the space $\mathcal C(V)$ of cyclic multilinear forms on a graded space $V[1]$ is a bilinear map:
$$
(\Omega,\Omega')\mapsto\{\Omega,\Omega'\}
$$
such that
\begin{itemize}
\item[1.] If $\mathcal C^k$ is the space of $k$-linear cyclic forms, $\{\mathcal C^{k+1},\mathcal C^{k'+1}\}\subset\mathcal C^{k+k'}$,\\
\item[2.] $\mathcal C(V)$, equipped with $\{~,~\}$ is a graded Lie algebra: $\{\Omega,\Omega'\}=-\{\Omega',\Omega\}$, and
$$
\{\Omega,\{\Omega',\Omega''\}\}+\{\Omega',\{\Omega'',\Omega\}\}+\{\Omega'',\{\Omega,\Omega'\}\}=0.
$$
\item[3.] for any linear form $\alpha$, $\{\alpha,~\}$ is like a derivation: for any $\beta_1,\ldots,\beta_k$ linear,
$$
\{\alpha,(\beta_1\otimes\ldots\otimes\beta_k)^{Cycl}\}=\sum_j\{\alpha,\beta_j\}(\beta_1\otimes\ldots\hat{\beta_j}\ldots\otimes\beta_k)^{Cycl},
$$
\item[4.] $\{\Omega,\mathcal C(V)\}=0$ if and only if $\Omega$ is in $\mathcal C^0$.\\
\end{itemize}
\end{defn}

The space of such brackets is in fact in one-to-one correspondence with the space of symmetric, non degenerated bilinear forms $b$ on $V$ (see \cite{PU}).\\

\begin{thm}

\

\begin{itemize}
\item[1.] There is a bijective map between the space $\mathcal P$ of Pinczon bracket on $\mathcal C(V)$ and the space $\mathcal B$ of degree 0, symmetric, non degenerated bilinear forms $b$ on $V$,
\item[2.] Let $b$ be an element in $\mathcal B$, $(e_i)$ be any basis for $V$, if $(e'_i)$ is the basis defined by $b(e_i,e'_j)=\delta_{ij}$, then the Pinczon bracket associated to $b$ is:
$$
\{\Omega,\Omega'\}=\sum_i\iota_{e_i}\Omega\odot\iota_{e'_i}\Omega'.
$$
\end{itemize}
\end{thm}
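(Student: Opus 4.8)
The plan is to prove the two halves of the bijection by showing in both directions how $b$ and the bracket determine one another, with the explicit formula of part 2 as the bridge: first that the formula really defines a Pinczon bracket, then that every Pinczon bracket arises this way from a unique $b$.

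First I would verify that the formula of part 2 satisfies the four axioms. Before touching the axioms I would record that $\sum_i\iota_{e_i}\otimes\iota_{e'_i}$ is contraction against the element $\sum_i e_i\otimes e'_i\in V\otimes V$ inverse to $b$, which is independent of the chosen basis; this makes $\{\,,\,\}$ well defined. Axiom 1 is a count of linearities. The central tool, which I would isolate as a lemma, is that for fixed $v$ the contraction $\iota_v$ is a graded derivation of the cyclic product, $\iota_v(A\odot B)=\iota_vA\odot B+A\odot\iota_vB$ (Koszul signs understood); a short check on cyclic products of linear forms confirms it. Granting this, axiom 3 is immediate: for $\alpha$ linear, $\iota_{e_i}\alpha=\alpha(e_i)$ is a scalar, so $\{\alpha,\Omega'\}=\iota_{\alpha^\sharp}\Omega'$ with $\alpha^\sharp=\sum_i\alpha(e_i)e'_i$, and the derivation rule for $\iota_{\alpha^\sharp}$ on a cyclic product of linear forms is exactly the stated identity, since $\beta_j(\alpha^\sharp)=\{\alpha,\beta_j\}$.

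For axiom 2, graded skew-symmetry comes from combining the graded commutativity of $\odot$ (the preceding Lemma) with the symmetry of $b$, which gives $\sum_i e_i\otimes e'_i=\sum_i e'_i\otimes e_i$; relabelling the summation index then matches $\{\Omega,\Omega'\}$ and $\{\Omega',\Omega\}$ up to the expected Koszul sign. The graded Jacobi identity is the main obstacle. I would expand $\{\Omega,\{\Omega',\Omega''\}\}$ by applying the formula twice and then use the derivation lemma to push the inner contraction $\iota_{e'_i}$ through the product $\iota_{e_j}\Omega'\odot\iota_{e'_j}\Omega''$. Summing over the cyclic permutations of $(\Omega,\Omega',\Omega'')$, the terms carrying a double contraction on a single form cancel in pairs, the cancellation being forced by the symmetry of $\sum_i e_i\otimes e'_i$ together with the graded commutativity of $\odot$, and what survives is $0$. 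This is the graded analogue of the Pinczon--Ushirobira computation \cite{PU}, so the genuine difficulty is the Koszul bookkeeping. Finally, axiom 4 uses non-degeneracy: scalars have vanishing contractions, hence are central, while for nonzero linear $\alpha$ one has $\{\alpha,\beta\}=\sum_i\alpha(e_i)\beta(e'_i)\ne0$ for suitable $\beta$, and the derivation rule propagates non-vanishing of $\{\alpha,-\}$ from linear forms to all of $\mathcal C(V)$.

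It then remains to prove the correspondence is bijective. To recover $b$ and obtain injectivity, I would restrict an arbitrary Pinczon bracket to linear forms: by axiom 1, $b^*(\alpha,\beta):=\{\alpha,\beta\}$ is a scalar; by the skew-symmetry of axiom 2 in the shifted grading it is symmetric; and by axioms 3 and 4 it is non-degenerate, so it is the inverse form of a unique $b\in\mathcal B$, which the explicit formula identifies with $\sum_i\alpha(e_i)\beta(e'_i)$. For surjectivity I would show a Pinczon bracket is completely determined by $b^*$: when one argument is linear the bracket is fixed by axiom 3, and for $\Omega$ of linearity $k+1$ and $\Omega'$ of linearity $k'+1$ with $k,k'\ge1$ the Jacobi identity gives, for every linear $\alpha$,
$$
\{\alpha,\{\Omega,\Omega'\}\}=\{\{\alpha,\Omega\},\Omega'\}+\{\Omega,\{\alpha,\Omega'\}\},
$$
whose right-hand side involves only brackets of strictly smaller total linearity. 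Since $\{\alpha,-\}=\iota_{\alpha^\sharp}$ and, by non-degeneracy, $\alpha^\sharp$ ranges over all of $V$, the left-hand side determines $\{\Omega,\Omega'\}$, a form of positive linearity being determined by all its contractions. Induction on $k+k'$ then shows the bracket is uniquely fixed by its restriction to linear forms, i.e.\ by $b$; as the formula of part 2 produces a Pinczon bracket with this restriction, every Pinczon bracket equals $\{\,,\,\}_b$ for the recovered $b$. This closes the bijection, the graded Jacobi identity of the existence half remaining the one genuinely laborious step.
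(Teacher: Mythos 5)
Your uniqueness half is sound and is essentially the paper's own argument: recover $b^\star$ from the bracket restricted to $\mathcal C^1$, get non-degeneracy from axioms 3 and 4 (a degenerate direction would be central), then induct on total linearity using the Jacobi identity with linear forms, a form of positive linearity being determined by its contractions. The existence half, however, rests on a false central lemma: the contraction $\iota_v$ is \emph{not} a graded derivation of the cyclic product $\odot$. Take $A,B$ cyclic \emph{bilinear} forms (signs omitted, ungraded case already fails). Then
$$
\iota_v(A\odot B)(x_2,x_3,x_4)=A(v,x_2)B(x_3,x_4)+A(v,x_4)B(x_2,x_3)+A(x_3,x_4)B(v,x_2)+A(x_2,x_3)B(v,x_4),
$$
four terms, whereas $\iota_vA\odot B+A\odot\iota_vB$ has six, the two extra ones being
$$
A(v,x_3)B(x_2,x_4)+A(x_2,x_4)B(v,x_3),
$$
which pair non-consecutive arguments and do not cancel: with $A=\epsilon_1\odot\epsilon_3$, $B=\epsilon_2\odot\epsilon_4$, $v=e_1$, $(x_2,x_3,x_4)=(e_2,e_3,e_4)$ the discrepancy equals $1$. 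Your ``short check on cyclic products of linear forms'' passes only at the lowest arities (e.g.\ $A$ bilinear, $B$ linear), because cyclic forms of arity at most $2$ are symmetric, and that symmetry hides the phenomenon: when $v$ lands in an interior slot of the $A$-block of $(A\otimes B)^{Cycl}$, rotating $v$ to the front by cyclicity of $A$ leaves the arguments of $B$ inserted \emph{inside} the argument string of $\iota_vA$, not concatenated after it. So $\iota_v(A\odot B)$ is a sum of insertion (Gerstenhaber-type) terms, not $\iota_vA\odot B\pm A\odot\iota_vB$.

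With the lemma gone, your Jacobi argument --- push the inner contraction through as a derivation, then cancel double-contraction terms in pairs --- collapses; and there is a second structural obstruction you should notice: the paper proves that $\odot$ is commutative but \emph{non-associative} (the Lemma of Section 2), and the ``biderivation against a symmetric tensor yields a Poisson-type bracket'' cancellation scheme needs associativity even where a derivation property holds. This is precisely why the paper does not attempt a direct verification: it defers the Lie-algebra property to Section 3, writes every cyclic form as $\Omega_Q$ with $Q$ a $B$-quadratic multilinear map, and shows $\{\Omega_Q,\Omega_{Q'}\}=\Omega_{[Q,Q']}$ --- the surviving cross terms in the contracted cyclic product are exactly the insertions that assemble into the compositions $Q\circ Q'$ and $Q'\circ Q$ --- so the Jacobi identity follows from the pre-Lie identity for the insertion composition $\circ$ on coderivations of the deconcatenation coalgebra. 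To repair your proof you would have to replace the false derivation identity by this insertion formula, at which point you are rederiving the paper's route; alternatively, keep your (correct) uniqueness induction and cite the coderivation isomorphism for existence, as the paper does.
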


Remark the last formula is exactly the Pinczon-Ushirobira one, given in \cite{PU}. The sign in front of the bracket in \cite{PU} is the sign $\eta_2$. 

\begin{proof}

Let $\{~,~\}$ be a Pinczon bracket on $\mathcal C(V)$, then:
$$
\{\mathcal C^0,\mathcal C(V)\}=0,\quad\text{ and }\quad\{\mathcal C^1,\mathcal C^1\}\subset\mathcal C^0=\mathbb K.
$$
Thus the bracket defines a degree 0 bilinear, antisymmetric form $B^\star$ on $\mathcal C^1=(V[1])^\star$. As above this defines a symmetric bilinear form $b^\star$ on the space $(V[1])^\star[1]$, which is canonically isomorphic to $V^\star$.

Let us suppose $b^\star$ (or $B^\star$) degenerated, then there is $\alpha$ in $\mathcal C^1$ such that
$$
B^\star(\alpha,\mathcal C^1)=\{\alpha,\mathcal C^1\}=0.
$$
Then, for any linear forms $\beta_1,\ldots,\beta_k$, $\{\alpha,(\beta_1\otimes\ldots\otimes\beta_k)^{Cycl}\}=0$, or $\{\alpha,\mathcal C^k\}=0$, and $\alpha$ is a central element. This is impossible, $b^\star$ is non degenerated and allows to identify $V^\star$ and $V$ and to define a non degenerated bilinear symmetric form $b$ on $V$.\\

Let $B^\star$ be a skewsymmetric bilinear form on $(V[1])^\star$. For any basis $(e_i)$ of $V[1]$ there are vectors $e'_i$ such that (with the rule on signs):
$$
B^\star(\alpha,\beta)=\sum_i\iota_{e_i}\alpha\otimes \iota_{e'_i}\beta=\sum_i\alpha(e_i)\beta(e'_i)=\sum_i\iota_{e_i}\alpha\odot\iota_{e'_i}\beta.
$$

Coming back to $V^\star$ this means, if all the objects are homogeneous,
$$\aligned
b^\star(\alpha,\beta)&=\sum_i(-1)^{|\beta|}\alpha(e_i)\beta(e'_i).
\endaligned
$$
Identify $V^\star$ to $V$ by defining, for any $\gamma$ in $V^\star$, the vector $x_\gamma$ in $V$ such that $\alpha(x_\gamma)=b^\star(\alpha,\gamma)$, for any $\alpha$ in $V^\star$. Then, if $(\epsilon_i)$ is the dual basis of $(e_i)$ ($\epsilon_i(e_j)=\delta_{ij}$),
$$
b^\star(\epsilon_i,\beta)=(-1)^{|\beta||\epsilon_i|}\beta(x_{\epsilon_i})=(-1)^{|\beta|}\beta(e'_i).
$$
Therefore $e'_i=x_{\epsilon_i}$, $(e'_i)$ is a basis for $V$, and with the same computation, if $(\epsilon'_i)$ is the dual basis, $x_{\epsilon'_i}=(-1)^{|e_i|}e_i$, and $b(e'_j,e_i)=\delta_{ij}$.

Consider now the bracket:
$$
\{\Omega,\Omega'\}_{PU}=\sum_i\iota_{e_i}\Omega\odot \iota_{e'_i}\Omega',
$$
where $\Omega$ is a $(k+1)$-linear cyclic form and $\Omega'$ a $(k'+1)$ one. In a following section, we shall prove this bracket defines a graded Lie algebra structure on $\mathcal C(V)$. It is clear that, for any $\alpha$ and $\beta_j$ linear,
$$
\{\alpha,(\beta_1\otimes\ldots\otimes\beta_k)^{Cycl}\}_{PU}=\sum_j\{\alpha,\beta_j\}_{PU}(\beta_1\otimes\ldots\hat{\beta_j}\ldots\otimes\beta_k)^{Cycl}.
$$
Moreover the center of the Lie algebra $(\mathcal C(V),\{~,~\}_{PU})$ is $\mathcal C^0$. In other word, $\{~,~\}_{PU}$ is a Pinczon bracket.\\

If $k+k'\leq0$, $\{\Omega,\Omega'\}=\{\Omega,\Omega'\}_{PU}$. Suppose by induction this relation holds for $k+k'<N$ and consider $\Omega$ and $\Omega'$ such that $k+k'=N$. For any $i$,
$$\aligned
\{\epsilon_i,\{\Omega,\Omega'\}\}&=-\{\Omega,\{\Omega',\epsilon_i\}\}-\{\Omega',\{\epsilon_i,\Omega\}\}\\
&=-\{\Omega,\{\Omega',\epsilon_i\}_{PU}\}_{PU}-\{\Omega',\{\epsilon_i,\Omega\}_{PU}\}_{PU}\\
&=\{\epsilon_i,\{\Omega,\Omega'\}_{PU}\}_{PU}
=\iota_{e'_i}\{\Omega,\Omega'\}_{PU}
\endaligned
$$

On the other hand, if $\{\Omega,\Omega'\}=\sum_\beta(\beta_1\otimes\ldots\otimes\beta_{k+k'})^{Cycl}$,
$$\aligned
\{\epsilon_i,\{\Omega,\Omega'\}\}&=\sum_{\beta,j}\beta_j(e'_i)(\beta_1\otimes\ldots\hat{\beta_j}\ldots\otimes\beta_{k+k'})^{Cycl}\\
&=\sum_\beta \iota_{e'_i}(\beta_1\otimes\ldots\otimes\beta_{k+k'})^{Cycl}
=\iota_{e'_i}\{\Omega,\Omega'\}.
\endaligned
$$
Since $(e'_i)$ is a basis for $V[1]$, this implies $\{\Omega,\Omega'\}_{PU}=\{\Omega,\Omega'\}$, and proves existence and unicity of the Pinczon bracket associated to the symmetric, non degenerated bilinear form $b$ on $V$ and the expression of this bracket.\\

\end{proof}



\section{Codifferential}


\

\subsection{General construction}

\

The tensor algebra $\bigotimes^+V[1]=\sum_{k>0}\bigotimes^kV[1]$ has a natural comultiplication: the deconcatenation map $\Delta$:
$$
\Delta(x_1\otimes\ldots\otimes x_k)=\sum_{r=1}^{k-1}(x_1\otimes\ldots\otimes x_r)\bigotimes(x_{r+1}\otimes\ldots\otimes x_k).
$$

It is well known (see for instance \cite{AAC2,BGHHW}) that any multilinear mapping $Q$ can be extended in an unique way into a coderivation $D_Q$ of $\Delta$, moreover to any coderivation $D$ corresponds a sequence $(Q_k)_{k\geq1}$ of $k$-linear mappings such that $D=\sum_k D_{Q_k}$ (this sequence can be viewed as the Taylor series of $D$). Moreover, the space of coderivations is a natural graded Lie algebra, thus the space of multilinear mappings is a graded Lie algebra for the bracket:
$$
[Q,Q']=Q\circ Q'-Q'\circ Q,
$$
with, if $x_{[a,b]}$ denotes the tensor product $x_a\otimes x_{a+1}\otimes\dots\otimes x_b$,
$$
Q\circ Q'(x_{[1,k+k'-1]})=\sum_{r=0}^{k-1}Q(x_{[1,r]},Q'(x_{[r+1,r+k']}),x_{[r+k'+1,k+k'-1]}).
$$

\begin{lem}

The space $\mathcal D$ of multilinear mappings is a Lie algebra for its bracket.\\
\end{lem}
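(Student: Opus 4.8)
The plan is to transport the Lie algebra structure from the coderivations to $\mathcal D$ through the bijection $Q\mapsto D_Q$ recalled just above. Since the coderivations of $\Delta$ form a graded Lie subalgebra of the endomorphism algebra of $\bigotimes^+V[1]$ under the graded commutator, graded antisymmetry and the graded Jacobi identity hold automatically there; these are general facts about the commutator of a graded associative algebra. Thus the only thing left to prove is that the bracket written as $[Q,Q']=Q\circ Q'-Q'\circ Q$ is exactly the image, under the bijection, of the commutator $[D_Q,D_{Q'}]$ of the associated coderivations, i.e. that $D_{[Q,Q']}=[D_Q,D_{Q'}]$. Note that graded antisymmetry of $[~,~]$ is in any case immediate from its very definition, once the implicit Koszul sign in the second term is taken into account, so the real content is the Jacobi identity, which will be inherited once the identification with the commutator is established.

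First I would recall the explicit form of the coderivation $D_Q$ extending a $k$-linear map $Q$: writing $\pi$ for the projection $\bigotimes^+V[1]\to V[1]$, one has $\pi\circ D_Q=Q$ on $\bigotimes^kV[1]$ and $0$ elsewhere, and the coderivation property forces
$$
D_Q(x_{[1,n]})=\sum_{r=0}^{n-k}x_{[1,r]}\otimes Q(x_{[r+1,r+k]})\otimes x_{[r+k+1,n]},
$$
the sum running over all contiguous blocks, with the implicit Koszul signs. Since $D_{[Q,Q']}$ is a coderivation and coderivations are determined by their Taylor series, it suffices to compute $\pi\circ[D_Q,D_{Q'}]$ on $\bigotimes^{k+k'-1}V[1]$. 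Applying $D_{Q'}$ first produces tensors of length $k$, and then $\pi\circ D_Q$ is nonzero only when $Q$ consumes all $k$ remaining factors; as the only block of length $k$ is the whole tensor, this forces $Q$ to act on the block containing the output of $Q'$, and yields precisely
$$
\pi\circ D_Q\circ D_{Q'}(x_{[1,k+k'-1]})=\sum_{r=0}^{k-1}Q(x_{[1,r]},Q'(x_{[r+1,r+k']}),x_{[r+k'+1,k+k'-1]})=Q\circ Q'(x_{[1,k+k'-1]}).
$$
Subtracting the symmetric term gives $\pi\circ[D_Q,D_{Q'}]=Q\circ Q'-Q'\circ Q=[Q,Q']$, hence $D_{[Q,Q']}=[D_Q,D_{Q'}]$.

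With this identification in hand the lemma follows: the bijection $Q\mapsto D_Q$ becomes an isomorphism of bracket-carrying graded vector spaces onto the graded Lie algebra of coderivations, so $(\mathcal D,[~,~])$ is itself a graded Lie algebra. The main obstacle I anticipate is not conceptual but combinatorial: keeping track of the Koszul signs, both in the explicit expansion of $D_Q$ and when commuting $Q$ past the factors $x_{[1,r]}$ during the composition, so that the implicit signs in the stated formula for $Q\circ Q'$ come out correctly. An alternative, more self-contained route would avoid coderivations entirely and instead check directly that $\circ$ is a graded right pre-Lie product, namely that the associator $(Q\circ Q')\circ Q''-Q\circ(Q'\circ Q'')$ is graded symmetric in $Q'$ and $Q''$, from which the graded Jacobi identity for the commutator is a formal consequence; but this route concentrates all the sign bookkeeping into one identity and is essentially equivalent in difficulty.
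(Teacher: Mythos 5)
Your proof is correct, but it takes a genuinely different route from the paper's. The paper works entirely at the level of $\mathcal D$: it expands $(Q\circ Q')\circ Q''$ into three families of terms --- those where $Q''$ is inserted inside an argument slot of $Q'$, and those where $Q'$ and $Q''$ occupy disjoint slots of $Q$ (with $Q''$ to the left or to the right of $Q'$) --- observes that the disjoint terms are symmetric under exchange of $Q'$ and $Q''$, and thereby obtains the graded right pre-Lie identity $(Q\circ Q')\circ Q''-(Q\circ Q'')\circ Q'=Q\circ(Q'\circ Q''-Q''\circ Q')$ (the paper's display has an evident typo, $(Q\circ Q'')\circ Q$ for $(Q\circ Q'')\circ Q'$), from which the Jacobi identity for the commutator follows formally; this is exactly the ``alternative, more self-contained route'' you sketch in your final sentence. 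Your main argument instead transports the bracket through the bijection $Q\mapsto D_Q$: you verify $\pi\circ D_Q\circ D_{Q'}=Q\circ Q'$ on $\bigotimes^{k+k'-1}V[1]$, conclude $D_{[Q,Q']}=[D_Q,D_{Q'}]$, and inherit antisymmetry and Jacobi from the graded commutator in the endomorphism algebra. For completeness this needs two inputs, both available: that the graded commutator of coderivations of $\Delta$ is again a coderivation (the paper cites this from the literature just before the lemma), and --- a point you pass over slightly quickly --- that $\pi\circ[D_Q,D_{Q'}]$ vanishes on $\bigotimes^mV[1]$ for $m\neq k+k'-1$, which is immediate since $D_Q\circ D_{Q'}$ lowers tensor length by exactly $k+k'-2$; together these guarantee that the Taylor series of $[D_Q,D_{Q'}]$ is concentrated in the single coefficient $[Q,Q']$, so the identification is legitimate. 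As for what each approach buys: yours makes the Jacobi identity cost-free and concentrates all verification in a one-line arity count, at the price of invoking the coderivation machinery; the paper's computation is longer and heavier on Koszul signs but self-contained, and it yields the strictly stronger structural fact that $\circ$ is a graded pre-Lie product, not merely that its commutator satisfies Jacobi.
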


\begin{proof}
This is well known (\cite{NR}), let us give here a sketch of the proof. Indeed:
$$\aligned
(Q\circ Q')\circ Q''(x_{[1,k+k'+k''-2]})&=\\
&\hskip -3cm=\sum Q(x_{[1,s]},Q''(x_{]s,s+k'']}),x_{]s+k'',r]},Q'(x_{]r,r+k']}),x_{]r+k',k+k'+k''-2]})\\
&\hskip-2cm +\sum Q(x_{[1,r]},Q'(x_{]r,s]},Q''(x_{]s,s+k'']}),x_{]s+k'',r+k'-1]}),x_{]r+k'-1,k+k'-2]})\\
&\hskip -2cm+\sum Q(x_{[1,r]},Q'(x_{]r,r+k']}),x_{]r+k',s]},Q''(x_{]s,s+k'']}),x_{]s+k'',k+k'+k''-2]}).
\endaligned
$$
This relation implies:
$$
(Q\circ Q')\circ Q''-(Q\circ Q'')\circ Q=Q\circ (Q'\circ Q''-Q''\circ Q'),
$$
and the Jacobi identity:
$$
[[Q,Q'],Q'']+[[Q',Q''],Q]+[[Q'',Q],Q']=0.
$$
\end{proof}

\subsection{Relation with the Pinczon bracket}

\

Consider a vector space $V$ equipped with a symmetric, non degenerated bilinear form $b$, with degree 0, define the symplectic form $B=\eta_2b$ on $V[1]$:
$$
B(x,y)=(-1)^xb(x,y).
$$ 
For any $k$ linear map $Q$ from $V[1]^k$ into $V[1]$, define the $(k+1)$-linear form:
$$
\Omega_Q(x_1,\ldots,x_{k+1})=B(Q(x_1,\ldots,x_k),x_{k+1}),
$$
and let us say that $Q$ is $B$-quadratic if and only if $\Omega_Q$ is cyclic. Denote $\mathcal D_B$ the space of $B$-quadratic multilinear maps (or coderivations).\\

The fundamental examples of cyclic maps are mappings associated to a Lie bracket or an associative multiplication on $V$. More precisely, if $(x,y)\mapsto q(x,y)$ is any internal law, with degree 0, and $Q(x,y)=(-1)^xq(x,y)$, then $\Omega_Q$ is cyclic if and only if:
$$\aligned
b(q(x,y),z)&=(-1)^{x+z}\Omega_Q(x,y,z)\\
&=(-1)^{x+z+x(y+z)}\omega_Q(y,z,x)\\
&=(-1)^{x+z+x(y+z)+y+3x}b(q(y,z),x)\\
&=(-1)^{(x+1)(y+z)}b(q(y,z),x)\\
&=b(x,q(y,z)),
\endaligned
$$
if and only if $(V,q,b)$ is an algebra with an invariant bilinear form, {\it i.e.} a quadratic algebra.\\

Remark that if $q$ is a Lie bracket (skewsymmetric), then $\omega_q$ is (totally) skewsymmetric, $\Omega_Q$ is (totally) symmetric.

Now, if $q$ is a commutative (and associative) product, and $b$ is invariant, $\omega_q$ is vanishing on the image of the twisted shuffle product on the 2 first variables. In fact there is an unique such product, defined by 
$$
(x_1\otimes x_2)\mapsto(x_1\otimes x_2)-(x_2\otimes x_1).
$$
$\Omega_Q$ is vanishing on the image of the shuffle product on the 2 first variables (with the sign rule):
$$
sh_{(1,1)}(x_1\otimes x_2)=(x_1\otimes x_2)+(x_2\otimes x_1).
$$

\begin{prop}

The space $\mathcal D_B$ of $B$-quadratic maps $Q$ is a Lie subalgebra of $\mathcal D$.

The space $\mathcal C(V)$ of cyclic forms, equipped with the Pinczon bracket associated to $b$ is a graded Lie algebra, isomorphic to $\mathcal D_B$. \\
\end{prop}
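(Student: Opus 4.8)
The whole statement will follow from the single structural identity
$$
\{\Omega_Q,\Omega_{Q'}\}=\Omega_{[Q,Q']},
$$
relating the Pinczon bracket attached to $b$ with the coderivation bracket $[Q,Q']=Q\circ Q'-Q'\circ Q$ of the previous lemma. Before proving it, I would record the elementary bijection on which everything rests. Since $B$ is non degenerate, for every $(k+1)$-linear form $\Phi$ there is exactly one $k$-linear map $Q$ with $B(Q(x_1,\ldots,x_k),\,\cdot\,)=\Phi(x_1,\ldots,x_k,\,\cdot\,)$; hence $Q\mapsto\Omega_Q$ is a degree preserving linear isomorphism from $\mathcal D$ onto the space of all multilinear forms, and by the very definition of a $B$-quadratic map it restricts to a bijection $\mathcal D_B\to\mathcal C(V)$.

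Granting the identity, both assertions are immediate. Each summand $\iota_{e_i}\Omega\odot\iota_{e'_i}\Omega'$ of the Pinczon bracket is a $\odot$-product, hence cyclic by the definition of $\odot$; so $\{\Omega_Q,\Omega_{Q'}\}$ is always cyclic, and the identity forces $\Omega_{[Q,Q']}$ to be cyclic whenever $\Omega_Q$ and $\Omega_{Q'}$ are, that is $[Q,Q']\in\mathcal D_B$. Thus $\mathcal D_B$ is closed under the bracket and is a Lie subalgebra of the graded Lie algebra $\mathcal D$ of the previous lemma, which proves the first statement. The identity then says that the bijection $Q\mapsto\Omega_Q$ intertwines $[~,~]$ on $\mathcal D_B$ with $\{~,~\}$ on $\mathcal C(V)$, so it is an isomorphism of graded Lie algebras; in particular $(\mathcal C(V),\{~,~\})$ inherits the graded Lie structure, and the deferred Jacobi identity for the Pinczon bracket is obtained for free.

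It remains to prove the identity, which is the only real computation. I would expand the Pinczon--Ushirobira formula $\{\Omega_Q,\Omega_{Q'}\}=\sum_i\iota_{e_i}\Omega_Q\odot\iota_{e'_i}\Omega_{Q'}$. Writing each interior product through $\Omega_Q(\ldots)=B(Q(\ldots),\,\cdot\,)$ and using the cyclicity of $\Omega_Q$ to carry the inserted vector into the last, $B$-paired slot, the sum $\sum_i$ is collapsed by the duality $b(e'_j,e_i)=\delta_{ij}$, which acts as a resolution of the identity. This contraction substitutes $Q'(\ldots)$ into one argument of $Q$; the cyclic symmetrization in $\odot$ spreads this substitution over all insertion points and, with the correct relative sign, reproduces precisely the two terms $Q\circ Q'$ and $-Q'\circ Q$ that make up $[Q,Q']$.

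The main obstacle, and essentially the whole difficulty, is the sign bookkeeping: one must track the shift factors $\eta_a$, the signs $\varepsilon_x(\sigma)$ generated by the cyclic permutations in $\odot$, and the twist $B=\eta_2 b$, and verify that their product is exactly the Koszul sign carried by $\Omega_{[Q,Q']}$. To keep this under control I would not expand blindly but mirror the proof of the previous theorem: first check the identity when one argument is linear, where property 3 makes $\{\alpha,\,\cdot\,\}$ a genuine derivation and both sides are transparent, and then run the same induction on $k+k'$, contracting both sides against the dual basis via $\{\epsilon_i,\Phi\}=\iota_{e'_i}\Phi$ to peel off one variable at a time until the base case is reached.
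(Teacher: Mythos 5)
Your overall architecture is the paper's: everything is channelled through the identity $\{\Omega_Q,\Omega_{Q'}\}=\Omega_{[Q,Q']}$ and the bijection $Q\mapsto\Omega_Q$, with the Jacobi identity for the Pinczon bracket imported from the Lie algebra $\mathcal D$ of the previous lemma. Your reorganization of the first assertion is in fact a small improvement: the paper verifies directly, by a separate computation, that $[Q,Q']$ is $B$-quadratic, whereas you observe that each summand $\iota_{e_i}\Omega\odot\iota_{e'_i}\Omega'$ is cyclic by construction, so once the identity is known, cyclicity of $\Omega_{[Q,Q']}$ (hence $[Q,Q']\in\mathcal D_B$) comes for free. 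That shortcut is legitimate, because the proof of the identity only uses $B$-quadraticity of $Q$ and $Q'$, never of $[Q,Q']$.

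The genuine gap is in how you propose to handle the only hard part, the signs. Your fallback induction on $k+k'$, ``mirroring the proof of the previous theorem,'' is circular: the induction step in that theorem rests on the graded Jacobi identity for $\{~,~\}_{PU}$ (the step $\{\epsilon_i,\{\Omega,\Omega'\}_{PU}\}_{PU}=-\{\Omega,\{\Omega',\epsilon_i\}_{PU}\}_{PU}-\{\Omega',\{\epsilon_i,\Omega\}_{PU}\}_{PU}$), and the paper explicitly defers the proof that $\{~,~\}_{PU}$ is a Lie bracket to this very proposition — it is a \emph{consequence} of the identity you are trying to prove, not an available tool. Even setting Jacobi aside, the peeling mechanism does not run: $\iota_x$ contracts only the first slot, $\iota_x\Omega_Q=\Omega_{\iota_xQ}$ is in general no longer cyclic, so the inductive hypothesis (which concerns brackets of \emph{cyclic} forms) cannot be applied to the contracted pieces, and there is no Leibniz rule expressing $\iota_x\{\Omega,\Omega'\}_{PU}$ through brackets of contractions; note also that the bracket with a linear form is the cyclicized contraction $\{\epsilon_i,\Phi\}_{PU}=(\iota_{e'_i}\Phi)^{Cycl}$, not the raw one. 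You are therefore thrown back on the direct computation of your third paragraph, and the one ingredient you never state or prove is exactly where the paper spends its effort: the invariance relation for $B$-quadratic maps,
\begin{equation*}
B(Q(x_{[1,k]}),Q'(x_{]k,k+k']}))=-B(Q'(Q(x_{[1,k]}),x_{]k,k+k'-1]}),x_{k+k'})=B(Q(Q'(x_{]k,k+k']}),x_{[1,k-1]}),x_k),
\end{equation*}
which is what converts each term of the cyclic sum $\sum_{\sigma\in Cycl}B(Q(x_{\sigma^{-1}([1,k])}),Q'(x_{\sigma^{-1}([k+1,k+k'])}))$ — according to whether $x_{k+k'}$ falls in the $Q$-block or the $Q'$-block — into an insertion term of $Q\circ Q'$ or of $-Q'\circ Q$, Koszul sign included. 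Until that relation is established, ``with the correct relative sign'' is an assertion, not a proof; with it, your streamlined version of the argument closes completely.
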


\begin{proof}
For any sequence $I=\{i_1,\ldots,i_k\}$ of indices, denote $x_I$ the tensor $x_{i_1}\otimes\ldots\otimes x_{i_k}$.
 
Remark that (with our notations), if $Q$ is $k$-linear and $Q'$ $k'$-linear, without the sign rule, but with explicit signs,
$$\aligned
A&=(-1)^{Q'+\sum_{j>k}x_j}b(Q(x_{[1,k]}),Q'(x_{]k,k+k']}))\\
&=(-1)^{Q'+\sum_{j>k}x_j+(Q'+\sum_{j>k}x_j+1)(Q+\sum_{i\leq k}x_i+1)}b(Q'(x_{]k,k+k']}),Q(x_{[1,k]}))\\
&=-(-1)^{Q+\sum_{i\leq k}x_i}(-1)^{(Q'+\sum_{j>k}x_j)(Q+\sum_{i\leq k}x_i)}b(Q'(x_{]k,k+k']}),Q(x_{[1,k]})),
\endaligned
$$
Suppose $Q$ and $Q'$ $B$-quadratic, thus, with the Koszul rule, this is just:
$$\aligned
B(Q(x_{[1,k]}),Q'(x_{]k,k+k']}))&=-B(Q'(Q(x_{[1,k]}),x_{]k,k+k'-1]}),x_{k+k'})\\
&=B(Q(Q'(x_{]k,k+k']}),x_{[1,k-1]}),x_k).
\endaligned
$$
Therefore:
$$\aligned
&B(Q\circ Q'(x_{[2,k+k']}),x_1)=\sum_{1\leq r}B(Q(x_{[2,r]},Q'(x_{]r,r+k']}),x_{]r+k',k+k']}),x_1)\\
&=\sum_{1\leq r<k}B(Q(x_{[1,r]},Q'(x_{]r,r+k']}),x_{]r+k',k+k'-1]}),x_{k+k'})
+B(Q(x_{[1,k]}),Q'(x_{]k,k+k']}))\\
&=\sum_{1\leq r<k}B(Q(x_{[1,r]},Q'(x_{]r,r+k']}),x_{]r+k',k+k'-1]}),x_{k+k'})
-B(Q'(Q(x_{[1,k]}),x_{]k,k+k'-1]}),x_{k+k'})\\
\endaligned
$$
Or
$$\aligned
&B([Q,Q'](x_{[2,k+k']}),x_1)=\\
&=\sum_{1\leq r<k}B(Q(x_{[1,r]},Q'(x_{]r,r+k']}),x_{]r+k',k+k'-1]}),x_{k+k'})
+B(Q(Q'(x_{[1,k']}),x_{]k',k+k'-1]}),x_{k+k'})\\
&
-\sum_{1\leq r<k'}B(Q'(x_{[1,r]},Q(x_{]r,r+k]}),x_{]r+k,k+k'-1]}),x_{k+k'})
-B(Q'(Q(x_{[1,k]}),x_{]k,k+k'-1]}),x_{k+k'})\\
&=B([Q,Q'](x_{[1,k+k'-1]}),x_{k+k'})
\endaligned
$$

Thus $[Q,Q']$ is $B$-quadratic, $\mathcal D_B$ is a Lie subalgebra of $\mathcal D$.\\

On the other hand, $\Omega_Q$ (resp. $\Omega_{Q'}$) is a $k+1$-linear (resp. $k'+1$-linear) cyclic form, and, with a little abuse of notations,
$$\aligned
\{\Omega_Q,\Omega_{Q'}\}&(x_1,\ldots,x_{k+k'})=\left(\sum_i\iota_{e_i}\Omega_Q\otimes\iota_{e'_i}\Omega_{Q'}\right)^{Cycl}(x_1,\ldots,x_{k+k'})\\
&=\sum_{\sigma\in Cycl}\sum_i B(Q(x_{\sigma^{-1}(1)},\ldots,x_{\sigma^{-1}(k)}),e_i)
B(Q'(x_{\sigma^{-1}(k+1)},\ldots,x_{\sigma^{-1}(k+k')}),e'_i)\\
&=\sum_{\sigma\in Cycl}B(Q(x_{\sigma^{-1}([1,k])}),Q'(x_{\sigma^{-1}([k+1,k+k'])})).
\endaligned
$$

Consider a term in this sum, such that $k+k'$ belongs to $\sigma^{-1}([1,k])$. This term is:
$$
B(Q(x_{[r+k'+1,k+k']},x_{[1,r]}),Q'(x_{[r+1,r+k']}))=B(Q(x_{[1,r]},Q'(x_{[r+1,r+k']}),x_{[r+k'+1,k+k'-1]}),x_{k+k'}),
$$
and the sum of all these terms is just $B((Q\circ Q')(x_{[1,k+k'-1]},x_{k+k'}))$.

Similarly, a term such that $k+k'$ is in $\sigma^{-1}([k+1,k+k'])$ is:
$$
B(Q(x_{[r+1,r+k']}),Q'(x_{[r+k+1,k+k']},x_{[1,r]}))=-B(Q'(x_{[1,r]},Q(x_{[r+1,r+k]}),x_{[r+k+1,k+k'-1]}),x_{k+k'}),
$$
and the corresponding sum is $-B((Q'\circ Q)(x_{[1,k+k'-1]},x_{k+k'}))$.

This proves:
$$
\{\Omega_Q,\Omega_{Q'}\}=\Omega_{[Q,Q']}.
$$
Since the map $Q\mapsto\Omega_Q$ is bijective, this proves the proposition.\\
\end{proof}

Let us now study in a more detailled way the three main cases, when $Q$ corresponds to an associative, or a commutative, or a Lie strucuture, or to such a structure up to homotopy.\\


\section{Associative Pinczon algebras}


\

\subsection{Associative quadratic algebras}

\

Suppose now $q$ is an associative law (with degree 0) on $V$, and $b$ is invariant, then $q$ defines a coderivation $Q$ of $\Delta$ on $\otimes^+V[1]$, with degree 1, which is the Bar resolution of the associative algebra $(V,q)$. The associativity of $q$ is equivalent to the relation $[Q,Q]=0$.

More generally, a structure of $A_\infty$ algebra (or associative algebra up to homotopy) on the space $V$ is a degree 1 coderivation $Q$ of $\Delta$ on $\otimes^+V[1]$, such that $[Q,Q]=0$. With this last relation, the map
$$
\Lambda\mapsto\{\Omega_Q,\Lambda\}
$$
is a degree 1 differential on the (graded) Lie algebra $\mathcal C(V)$. The corresponding cohomo\-logy is the Pinczon cohomology of multilinear forms.\\

\begin{defn}

\

An associative Pinczon algebra $(\mathcal C(V),\{~,~\},\Omega)$ is a vector space $V$, such that $\mathcal C(V)$ is equipped with a Pinczon bracket $\{~,~\}$, and an element $\Omega$ in $\mathcal C(V)$, with degree 3, such that $\{\Omega,\Omega\}=0$.\\
\end{defn}

If $\Omega$ is trilinear, then an associative Pinczon algebra is simply a quadratic associative algebra $(V,b,q)$, where $b$ is the symmetric non degenerated form coming from the restriction of the Pinczon bracket to $\left((V[1])^\star\right)^2$, and $q$ is the bilinear mapping associated to $Q$ such that $\Omega=\Omega_Q$.\\

\begin{prop}

\

Let $(\mathcal C(V),\{~,~\},\Omega)$ be an associative Pinczon algebra, then there is an unique symplectic form $B$ on $V[1]$ and an unique $B$-quadratic coderivation $Q$ of $(\bigotimes^+(V[1]),\Delta)$ such that $\Omega=\Omega_Q$.

Conversely, if $(V,b)$ is a vector space equipped with a non degenerated symmetric bilinear form $b$, any $B$-quadratic structure $Q$ of $A_\infty$ algebra defines an unique associative Pinczon algebra $(\mathcal C(V),\{~,~\},\Omega_Q)$.\\
\end{prop}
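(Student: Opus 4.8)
The plan is to assemble the two correspondences already established: the bijection of the Theorem of Section~2 between Pinczon brackets and degree~0 symmetric nondegenerate bilinear forms, and the graded Lie algebra isomorphism $Q\mapsto\Omega_Q$ of the preceding Proposition, under which $\{\Omega_Q,\Omega_{Q'}\}=\Omega_{[Q,Q']}$. No genuinely new computation is needed; the content lies in matching degrees and in transporting the structure equations $\{\Omega,\Omega\}=0$ and $[Q,Q]=0$ through these bijections.

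For the direct statement I would start from the Pinczon bracket carried by $\mathcal C(V)$ and apply the Theorem to recover the unique degree~0 symmetric nondegenerate $b$ it corresponds to; setting $B=\eta_2 b$ then produces the unique symplectic form on $V[1]$. The preceding Proposition identifies $\mathcal C(V)$ with $\mathcal D_B$ through the bijective map $Q\mapsto\Omega_Q$, so the given $\Omega$ is $\Omega_Q$ for a unique $B$-quadratic $Q$, and reading off its Taylor series exhibits $Q$ as a coderivation of $\Delta$. The point to verify is that degree~$3$ for $\Omega$ forces $\deg Q=1$: using the conventions $\deg Q=|q|+k-1$ and $\deg\Omega_Q=|\omega_q|+(k+1)$ together with $|\omega_q|=|q|$ (as $b$ has degree~$0$), one gets $\deg\Omega_Q=\deg Q+2$ in every arity, so degree~$1$ for $Q$ matches degree~$3$ for $\Omega$ uniformly across the Taylor components. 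Finally, $\{\Omega_Q,\Omega_Q\}=\Omega_{[Q,Q]}$ turns $\{\Omega,\Omega\}=0$ into $\Omega_{[Q,Q]}=0$, whence $[Q,Q]=0$ by injectivity of $Q\mapsto\Omega_Q$; thus $Q$ is an $A_\infty$ structure, and the uniqueness of both $B$ and $Q$ is inherited from the uniqueness in the two bijections.

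For the converse I would run the same dictionary in reverse: the Theorem associates to $b$ a unique Pinczon bracket on $\mathcal C(V)$, and setting $\Omega=\Omega_Q$ produces a cyclic form (because $Q$ is $B$-quadratic) of degree~$3$ (because $Q$ has degree~$1$, by the same degree count $\deg\Omega_Q=\deg Q+2$). The identity $\{\Omega_Q,\Omega_Q\}=\Omega_{[Q,Q]}$ shows that $\{\Omega,\Omega\}=0$ follows from the $A_\infty$ relation $[Q,Q]=0$, so $(\mathcal C(V),\{~,~\},\Omega_Q)$ satisfies the defining axioms of an associative Pinczon algebra; uniqueness is again immediate from the bijectivity of $Q\mapsto\Omega_Q$ and of the map $b\mapsto\{~,~\}$.

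The main, and essentially the only, obstacle is bookkeeping rather than conceptual: one must carry out the degree matching carefully, checking that the ``degree~$3$'' imposed in the definition of an associative Pinczon algebra is exactly the image, under the shift conventions $\eta_a$, of the ``degree~$1$'' $A_\infty$ codifferential, and one must be careful to pass between ``$B$-quadratic multilinear map'' and ``$B$-quadratic coderivation of $\Delta$'' via the recovery of a coderivation from its Taylor coefficients, as justified by the extension result for coderivations recalled earlier.
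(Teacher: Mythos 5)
Your proposal is correct and is exactly the argument the paper intends: the paper states this proposition without a separate proof precisely because it is the direct assembly of the Theorem of Section~2 (the bijection $b\leftrightarrow\{~,~\}$, hence $B=\eta_2 b$) with the Proposition of Section~3 (the Lie algebra isomorphism $Q\mapsto\Omega_Q$ with $\{\Omega_Q,\Omega_{Q'}\}=\Omega_{[Q,Q']}$), transporting $\{\Omega,\Omega\}=0$ into $[Q,Q]=0$ by injectivity. Your degree bookkeeping $\deg\Omega_Q=\deg Q+2$ (so degree~$3$ forms correspond to degree~$1$ coderivations uniformly in arity, via the Taylor-series characterization of coderivations) is accurate and fills in the only detail the paper leaves tacit.
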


In other words, any associative Pinczon algebra $(\mathcal C(V),\{~,~\},\Omega)$ is equivalent to a structure of quadratic associative algebra up to homotopy on $V$.\\

\subsection{Bimodules and Hochschild cohomology}

\

In this situation, it is possible to recover the usual Hochschild cohomology of any (graded) bimodule $M$ for an associative algebra $(V,q)$.\\

First, put $q(x,y)=xy$ and consider the semidirect product $W=V\rtimes M$, that is the vector space $V\times M$, equipped with the associative multiplication:
$$
q_W((x,a),(y,b))=(xy,x\cdot b+a\cdot y).
$$ 
Now, since $W$ is a $(W,q_W)$ bimodule, its dual $W^\star=V^\star\times M^\star$ is also a $(W,q_W)$-bimodule, with:
$$
\big((x,a)\cdot f\big)(z,c)=f((z,c)(x,a)),\qquad \big(f\cdot(x,a)\big)(z,c)=f((x,a)(z,c)),
$$
or if $f=(g,h)\in V^\star\times M^\star$,
$$\aligned
(x,a)\cdot(g,h)(z,c)&=g(zx)+h(z\cdot a)+h(c\cdot x)=(x\cdot g+a\cdot h,x\cdot h)(z,c),\\
(g,h)\cdot(x,a)(z,c)&=g(xz)+h(a\cdot z)+h(x\cdot c)=(g\cdot x+h\cdot a,h\cdot x)(z,c).
\endaligned
$$

These objects define a structure of associative algebra on the space $\tilde{V}=W\times W^\star$, namely:
$$
\tilde{q}((x,a,g,h),(x',a',g',h'))=(xx',x\cdot a'+a\cdot x',x\cdot g'+g\cdot x'+a\cdot h'+h\cdot a',x\cdot h'+h\cdot x'),
$$
and a non degenerated symmetric bilinear form $\tilde b$,
$$
\tilde{b}((x,a,g,h),(x',a',g',h'))=g(x')+h(a')+g'(x)+h'(a).
$$
A direct computation shows that $(\tilde{V},\tilde{b},\tilde{q})$ is a quadratic associative algebra, it is the usual notion of double extension of $\{0\}$ by $W$.\\

\begin{defn}

\

Let $(V,q)$ be an associative algebra and $M$ a bimodule. The double semi-direct product of $V$ by $M$ is the quadratic associative algebra $(\tilde{V},\tilde{b},\tilde{q})$.

The corresponding associative Pinczon algebra $(\mathcal C(\tilde{V}),\{~,~\}\tilde{~},\Omega_{\tilde{Q}})$ is the Pinczon double semi-direct product of $V$ by $M$.\\
\end{defn}

Look now for a $k$-linear mapping $c$ from $V^k$ into $M$, with degree $|c|=2-k$, and consider it as a mapping $C'$, with degree 1, from $\tilde{V}[1]^k$ into $\tilde{V}[1]$, as follows ($x_j\in V$, $a_j\in M$, $g_j\in V^\star$, $h_j\in M^\star$):
$$
C'((x_1,a_1,g_1,h_1),\ldots,(x_k,a_k,g_k,h_k))=(0,C(x_1,\ldots,x_k),0,0).
$$
But the form $\Omega_{C'}$ being not cyclic, $C'$ is not a quadratic map. Therefore replace $C'$ by $\tilde{C}$ such that $\Omega_{\tilde{C}}=(\Omega_{C'})^{Cycl}$. Precisely:
$$
\Omega_{C'}((x_1,a_1,g_1,h_1),\ldots,(x_{k+1},a_{k+1},f_{k+1},g_{k+1}))=h_{k+1}(C(x_1,\ldots,x_k)),
$$
and
$$
\tilde{C}((x_1,a_1,g_1,h_1),\ldots,(x_k,a_k,g_k,h_k))=(0,C(x_1,\ldots,x_k),\sum_{j=1}^kC_j(x_1,\ldots,h_j,\ldots,x_k),0),
$$
with
$$
C_j(x_1,\ldots,h_j,\ldots,x_k)(y)=h_j(C(x_{j+1},\ldots,x_k,y,x_1,\ldots,x_{j-1})).
$$

Then, if $\tilde{Q}$ is associated to $\tilde{q}$ as above, denote simply:
$$\aligned
Q(x,a)&=\tilde{Q}((x,0,0,0),(0,a,0,0))=(-1)^xx\cdot a,\\
Q(a,x)&=\tilde{Q}((0,a,0,0),(x,0,0,0))=(-1)^aa\cdot x,
\endaligned
$$
and so on. Recall that, for instance, for any $h\in M^\star[1]$, $a\in M[1]$, and $y\in M[1]$, $Q(a,h)$ and $Q(h,a)$ are in $V^\star[1]$, and:
$$\aligned
Q(x,g)(y)&=h(Q(y,x)),&\hskip 4cm~Q(g,x)(y)&=h(Q(x,y)),\\
Q(a,h)(y)&=h(Q(y,a)),&\hskip 4cm~Q(h,a)(y)&=h(Q(a,y)).
\endaligned
$$
Let us now compute $[\tilde{Q},\tilde{C}]=(0,[Q,C],\sum_j[Q,C_j],0)$. Let $D_j$ be the term depending linearly in $h_j$. Consider the cases $j=1$, $1<j<k+1$, $j=k+1$.\\

\noindent
\underline{Case: $j=1$}, then:
$$\aligned
D_1&=Q(h_1,C(x_2,\ldots,x_{k+1}))+Q(C_1(h_1,x_2,\ldots,x_k),x_{k+1})+C_1(Q(h_1,x_2),x_3,\ldots,x_{k+1})+\\
&\hskip 1cm+\sum_{r=2}^kC_1(h_1,x_2,\ldots,Q(x_r,x_{r+1}),\ldots,x_{k+1}).
\endaligned
$$
The evaluation of this form on $y\in V[1]$ is:
$$\aligned
D_1(y)&=h_1\Big[Q(C(x_2,\ldots,x_{k+1}),y)+C(x_2,\ldots,x_k,Q(x_{k+1},y))+Q(C(x_2,\ldots,x_{k+1}),y)+\\
&\hskip 1cm+\sum_{r=2}^kC(x_2,\ldots,Q(x_r,x_{r+1}),\ldots,x_{k+1},y)\Big]\\
&=h_1\left([Q,C](x_2,\ldots,x_{k+1},y)\right)=\left([Q,C]_1(h_1,\ldots,x_{k+1})\right)(y).
\endaligned
$$

\

\noindent
\underline{Case: $1<j<k+1$}, then:
$$\aligned
D_j&=Q(x_1,C_j(x_2,\ldots,h_j,\ldots,x_{k+1}))+Q(C_j(x_1,\ldots,h_j,\ldots,x_k),x_{k+1})+\\
&+C_{j-1}(x_1,\ldots,Q(x_{j-1},h_j),\ldots,x_{k+1})+C_j(x_1,\ldots,Q(h_j,x_{j+1}),\ldots,x_{k+1})+\\
&+\sum_{r<j-1}C_{j-1}(x_1,\ldots,Q(x_r,x_{r+1}),\ldots,h_j,\ldots,x_{k+1})+\\
&+\sum_{r>j+1}C_j(x_1,\ldots,h_j,\ldots,Q(x_r,x_{r+1}),\ldots,x_{k+1}).
\endaligned
$$
Evaluating this form on $y\in V[1]$, one gets:
$$\aligned
&D_j(y)=\\
&h_j\Big[C(x_{j+1},\ldots,x_{k+1},Q(y,x_1),x_2,\ldots,x_{j-1})+C(x_{j+1},\ldots,Q(x_{k+1},y),x_1,x_2,\ldots,x_{j-1})+\\
&+Q(C(x_{j+1},\ldots,x_{k+1},y,x_1,\ldots,x_{j-2}),x_{j-1})+Q(x_{j+1},C(x_{j+2},\ldots,x_{k+1},y,x_1,\ldots,x_{j-1}))+\\
&+\sum_{r\notin\{j-1,j\}}C(x_{j+1},\ldots,Q(x_r,x_{r+1}),\ldots,x_{k+1},y,x_1,\ldots,x_{j-1})\Big]\\
&=h_j\left([Q,C](x_{j+1},\ldots,x_{k+1},y,x_1,\ldots,x_{j-1})\right)=\left([Q,C]_j(x_1,\ldots,h_j,\ldots,x_{k+1})\right)(y).
\endaligned
$$

\

\noindent
\underline{Case: $j=k+1$}, then:
$$\aligned
D_{k+1}&=Q(x_1,C_k(x_2,\ldots,h_{k+1}))+Q(C(x_1,\ldots,x_k),h_{k+1})+C_k(x_1,\ldots,Q(x_k,h_{k+1}))+\\
&+\sum_{r=1}^{k-1}C_k(x_1,\ldots,Q(x_r,x_{r+1}),\ldots,h_{k+1}).
\endaligned
$$
The value of this form on $y\in V[1]$ is:
$$\aligned
D_{k+1}(y)&=h_{k+1}\Big[C(Q(y,x_1),x_2,\ldots,x_k)+Q(y,C(x_1,\ldots,x_k))+Q(C(y,x_1,\ldots,x_{k-1}),x_k)+\\
&\hskip 1cm+\sum_{r=1}^{k-1}C(y,x_1,\ldots,Q(x_r,x_{r+1}),\ldots,x_k)\Big]\\
&=h_1\left([Q,C](x_2,\ldots,x_{k+1},y)\right)=\left([Q,C]_{k+1}(x_1,\ldots,h_{k+1})\right)(y).
\endaligned
$$

\

Finally,
$$
[\tilde{Q},\tilde{C}]=\widetilde{[Q,C]}=\widetilde{d_Hc[1]},
$$
where $d_H$ is the Hochschild coboundary operator on the bimodule $M$. If $d_P$ is the Pinczon coboundary operator, defined by:
$$
\{\Omega_{\tilde{Q}},\Omega_{\tilde{C}}\}\tilde{~}=\Omega_{d_P\tilde{C}},
$$
this can be written $d_P\tilde{C}=\widetilde{d_Hc[1]}$.\\

\begin{prop}

\

Let $(V,q)$ be an associative algebra, and $\Phi:c\mapsto\tilde{C}$ the map associating to any multilinear mapping $c$ from $V^k$ into $M$, with degree $2-k$, the $\tilde{B}$-quadratic application $\tilde{C}$. Then $\Phi$ is a morphism from the Hochschild cohomology complex for the $(V,q)$ bimodule $M$ and the Pinczon cohomology complex of cyclic forms $\mathcal C(\tilde{V})$ on $\tilde{V}$.\\
\end{prop}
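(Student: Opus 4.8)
The plan is to verify that $\Phi:c\mapsto\tilde C$ is a cochain morphism, i.e.\ that it intertwines the Hochschild differential $d_H$ with the Pinczon differential $d_P$ and is compatible with the grading. Most of the computational work has in fact already been carried out: the proposition is essentially a corollary of the identity $[\tilde Q,\tilde C]=\widetilde{d_Hc[1]}$, established just above the statement through the three case computations ($j=1$, $1<j<k+1$, $j=k+1$). So the proof should open by recalling that $\Phi$ sends a $k$-linear map $c\colon V^k\to M$ of degree $2-k$ to the $\tilde B$-quadratic coderivation $\tilde C$ with $\Omega_{\tilde C}=(\Omega_{C'})^{Cycl}$, and that $d_P\tilde C$ is defined by $\{\Omega_{\tilde Q},\Omega_{\tilde C}\}\tilde{\,}=\Omega_{d_P\tilde C}$.

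The key steps I would carry out, in order, are the following. First I would record that $\Phi$ is a well-defined \emph{linear} map into $\mathcal C(\tilde V)$: the symmetrization $C'\mapsto\tilde C$ is forced precisely so that $\Omega_{\tilde C}$ is cyclic (equivalently $\tilde C$ is $\tilde B$-quadratic), which is what places the image inside $\mathcal C(\tilde V)$ rather than in the larger space of all coderivations. Second I would check that $\Phi$ respects degrees: a $k$-cochain $c$ of degree $2-k$ yields $C'$, hence $\tilde C$, of degree $1$ as a coderivation on $\tilde V[1]$, so that $\Omega_{\tilde C}$ sits in the correct graded piece and the shift $[1]$ accounts for the standard degree normalization between the Hochschild and the shifted complexes. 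Third, and this is the heart of the matter, I would invoke Proposition (the isomorphism of Lie algebras $\{\Omega_Q,\Omega_{Q'}\}=\Omega_{[Q,Q']}$) together with the computed identity: applying it with $Q=\tilde Q$ and $Q'=\tilde C$ gives
$$
\Omega_{d_P\tilde C}=\{\Omega_{\tilde Q},\Omega_{\tilde C}\}\tilde{\,}=\Omega_{[\tilde Q,\tilde C]}=\Omega_{\widetilde{d_Hc[1]}},
$$
and since $Q\mapsto\Omega_Q$ is a bijection (as used in the proof of that proposition) we conclude $d_P\tilde C=\widetilde{d_Hc[1]}=\Phi(d_Hc)$. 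This is exactly the intertwining relation $d_P\circ\Phi=\Phi\circ d_H$ that makes $\Phi$ a morphism of cochain complexes.

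The main obstacle is not the final algebraic deduction, which is immediate from the established identity, but rather making precise that $\Phi$ is defined on the \emph{whole} Hochschild complex in a way compatible with the case analysis: the three cases $j=1$, $1<j<k+1$, $j=k+1$ were computed for a cochain of a fixed arity $k$, and one must observe that the formula for $\tilde C$, and the resulting identity $[\tilde Q,\tilde C]=\widetilde{d_Hc[1]}$, holds uniformly in $k$ and is natural in $c$, so that $\Phi$ assembles into a single morphism of graded complexes. I would also take care to note that $\tilde Q$ is the $\tilde B$-quadratic coderivation attached to the double semi-direct product $(\tilde V,\tilde b,\tilde q)$ and satisfies $[\tilde Q,\tilde Q]=0$, so that $d_P=\{\Omega_{\tilde Q},-\}$ is genuinely a differential; the boundary terms between consecutive arities match precisely because the cyclic symmetrization reshuffles the arguments in the same cyclic manner that the Hochschild differential inserts the multiplication, which is what the three case computations verified term by term.
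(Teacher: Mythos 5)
Your proposal is correct and takes essentially the same route as the paper: the paper's proof of this proposition is exactly the material preceding the statement, namely the three-case computation ($j=1$, $1<j<k+1$, $j=k+1$) establishing $[\tilde{Q},\tilde{C}]=\widetilde{[Q,C]}=\widetilde{d_Hc[1]}$, combined with the correspondence $\{\Omega_{\tilde{Q}},\Omega_{\tilde{C}}\}\tilde{~}=\Omega_{[\tilde{Q},\tilde{C}]}$ and the definition of $d_P$, which yields $d_P\tilde{C}=\widetilde{d_Hc[1]}$, i.e.\ $d_P\circ\Phi=\Phi\circ d_H$. Your supplementary checks (linearity and well-definedness of $\Phi$, the degree bookkeeping making $\tilde{C}$ a degree $1$ $\tilde{B}$-quadratic coderivation, uniformity in $k$, and $[\tilde{Q},\tilde{Q}]=0$ so that $d_P$ is a differential) are sound but routine, and do not alter the argument.
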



\section{Commutative Pinczon algebras}


\

\subsection{Commutative quadratic algebras}

\

Consider a quadratic associative algebra $(V,b,q)$, but suppose now $q$ is commutative, that means, without any sign rule:
$$
q(x_1,x_2)=(-1)^{|x_1||x_2|}q(x_2,x_1),
$$
and consider, as above, the corresponding coderivation $Q$. It is now anticommutative, with degree 1, and seen as a map from the quotient $\underline\otimes^2V[1]$ of $\otimes^2V[1]$ by the image, into $V[1]$, of the $1,1$ shuffle product (with our sign rule):
$$
sh_{1,1}(x_1,x_2)=x_1\otimes x_2+x_2\otimes x_1.
$$

Recall that a $p,q$ shuffle $\sigma$ is a permutation $\sigma\in\mathfrak S_{p+q}$ such that $\sigma(1)<\ldots<\sigma(p)$ and $\sigma(p+1)<\ldots<\sigma(p+q)$. Denote $Sh(p,q)$ the set of all such shuffles. Then the $p,q$ shuffle product on $\otimes^+V[1]$ is (with the Koszul rule)
$$
sh_{p,q}(x_{[1,p]},x_{[p+1,p+q]})=\sum_{\sigma\in Sh(p,q)}x_{\sigma^{-1}(1)}\otimes\ldots\otimes x_{\sigma^{-1}(p+q)}.
$$

Denote $\underline{\otimes}^n(V[1])$ the quotient of $\otimes^nV[1]$ by the sum of all the image of the maps $sh_{p,n-p}$ ($0<p<n$). Similarly, abusively note $\underline{x}_{[1,n]}=x_1\underline{\otimes}\ldots\underline{\otimes}x_n$ the class of $x_1\otimes\ldots\otimes x_n$, where the $x_i$ belong to $V[1]$. Finally, $\underline{\otimes}^+(V[1])$ is the sum of all $\underline{\otimes}^n(V[1])$ ($n>0$).\\

After replacing $\otimes^+V[1]$ by $\underline{\otimes}^+V[1]$, let us replace the comultiplication $\Delta$ by a Lie cocrochet $\delta$, defined by
$$
\delta(\underline{x}_{[1,n]})=\sum_{j=1}^{n-1}\underline{x}_{[1,j]}\bigotimes \underline{x}_{[j+1,n]}-\underline{x}_{[j+1,n]}\bigotimes \underline{x}_{[1,j]}.
$$
In fact $\delta$ is well defined on the quotient and any coderivation $Q$ of $\delta$ is characterized by its Taylor expansion $Q=\sum_k Q_k$ where each $Q_k$ is a linear map from $\underline{\otimes}^kV[1]$ into $V[1]$ (\cite{AAC2,BGHHW}).\\

\begin{defn}

\

A structure of $C_\infty$ algebra, or commutative algebra up to homotopy, on $V$ is a degree 1 coderivation $Q$ of $\delta$, on $\underline{\otimes}^+V[1]$, such that $[Q,Q]=0$.\\
\end{defn}

Such mappings $Q$ are vanishing on the image of shuffle products. Consider now cyclic forms $\Omega_Q$ such that $Q$ is vanishing on shuffle products, or:

\begin{defn} 

\

A $(k+1)$-linear cyclic form $\Omega$ on the vector space $V[1]$ is vanishing on shuffle products if and only if, for any $y$, 
$(x_1,\ldots,x_k)\mapsto \Omega(x_1,\ldots,x_k,y)$ is vanishing on shuffle products.

Denote $\mathcal C_{vsp}(V)$ the space of cyclic, vanishing on shuffle products multilinear forms on $V[1]$.\\
\end{defn}

\begin{prop}

\

Suppose $\{~,~\}$ is a Pinczon bracket on the space $\mathcal C(V)$ of cyclic multilinear forms on $V[1]$. Then $\mathcal C_{vsp}(V)$ is a Lie subalgebra of $(\mathcal C(V),\{~,~\})$.\\
\end{prop}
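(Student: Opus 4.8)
The plan is to transport the statement to the coderivation side, where the previous Proposition has already done the hard work, and then isolate the single new fact about shuffles. Recall that $Q\mapsto\Omega_Q$ is an isomorphism of graded Lie algebras from $\mathcal D_B$ onto $(\mathcal C(V),\{~,~\})$, with $\{\Omega_Q,\Omega_{Q'}\}=\Omega_{[Q,Q']}$. First I would identify, under this isomorphism, the subspace $\mathcal C_{vsp}(V)$ with the space of $B$-quadratic maps $Q$ that vanish on all shuffle products. Indeed $\Omega_Q(x_1,\ldots,x_k,y)=B(Q(x_1,\ldots,x_k),y)$, so $\Omega_Q$ vanishes on shuffle products in its first $k$ slots for every $y$ if and only if $B(Q(\,\cdot\,),y)$ does for every $y$; since $B$ is non degenerate and $y$ ranges over all of $V[1]$, this is equivalent to $Q$ itself vanishing on every $sh_{p,k-p}$. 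Note also that $\{\Omega,\Omega'\}$ already lies in $\mathcal C(V)$, so cyclicity is automatic and only the vanishing on shuffles must be checked. It therefore suffices to prove that the space $\mathcal D_{vsp}\subset\mathcal D$ of multilinear maps vanishing on shuffle products is a Lie subalgebra: its intersection with the Lie subalgebra $\mathcal D_B$ is then again a subalgebra, and transports back to $\mathcal C_{vsp}(V)$.

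The reduced problem, that $Q,Q'\in\mathcal D_{vsp}$ implies $[Q,Q']\in\mathcal D_{vsp}$, is exactly the closure statement underlying the $C_\infty$ formalism, and I would prove it with the Lie cocrochet $\delta$ introduced above. A map vanishes on all shuffle products precisely when it factors through the quotient $\underline{\otimes}^kV[1]$, that is, when it is a Taylor coefficient of a coderivation of $\delta$ on $\underline{\otimes}^+V[1]$. Coderivations of any coalgebra are closed under the commutator bracket, so the commutator of two $\delta$-coderivations is again a $\delta$-coderivation; and, exactly as for the deconcatenation $\Delta$ in the previous section, the bracket read off on Taylor coefficients is the restriction of the Gerstenhaber bracket $[Q,Q']=Q\circ Q'-Q'\circ Q$. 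Hence $[Q,Q']$ again factors through the shuffle quotient, i.e. lies in $\mathcal D_{vsp}$, which is what is needed.

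The genuine content, and the step I expect to be the obstacle, is the compatibility between the composition product and the shuffles that legitimises the previous paragraph: one must know that $\delta$ is well defined on $\underline{\otimes}^+V[1]$ and that the Gerstenhaber composition descends to this quotient, i.e. that $Q\circ Q'$ again vanishes on shuffle products when $Q$ and $Q'$ do. Concretely, applying $Q\circ Q'$ to a shuffle $sh_{p,q}$ and splitting the sum over insertion positions according to how the inner $Q'$-window meets the two shuffled blocks, the terms in which the window straddles both blocks cancel because, for fixed data outside the window, the letters inside it run over a shuffle that $Q'$ kills; the remaining terms, where the window lies in a single block, reorganise after insertion of the $Q'$-output into shuffles on which the outer map $Q$ (and, after antisymmetrisation, $Q'$) vanishes. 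This is the Hopf-type compatibility between deconcatenation and shuffle product recorded in \cite{AAC2,BGHHW}, which I would invoke rather than re-derive; once it is in hand, the Proposition follows immediately from the reduction above.
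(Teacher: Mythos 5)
Your proposal is correct and follows essentially the same route as the paper: it uses the nondegenerate form $B$ attached to the Pinczon bracket to write every cyclic form as $\Omega_Q$, notes that $\Omega\in\mathcal C_{vsp}(V)$ if and only if $Q$ vanishes on shuffle products, and then invokes, exactly as the paper does, the known fact (from \cite{AAC2}) that maps vanishing on shuffle products are closed under the coderivation bracket. Your extra sketch of the window-splitting argument is a sound outline of that cited fact (indeed $Q\circ Q'$ already vanishes on shuffles term by term, so the antisymmetrisation you mention is not even needed), so it only adds detail beyond the paper's citation.
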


\begin{proof}

In fact, the Pinczon bracket defines a non degenerate form $b$ on $V$, thus $B$ on $V[1]$, any form $\Omega$ can be written as $\Omega=\Omega_Q$, with
$$
\Omega_Q(x_1,\ldots,x_{k+1})=B(Q(x_1,\ldots,x_k),x_{k+1}).
$$

Therefore $\Omega$ is in $\mathcal C_{vsp}(V)$ if and only if $Q$ is vanishing on shuffle products. Now, it is known that if $Q$, $Q'$ are vanishing on shuffle products, then $[Q,Q']$ is also vanishing on shuffle products (see for instance \cite{AAC2}). This proves the proposition.\\ 
\end{proof}

\begin{defn}

\

A commutative Pinczon algebra $(\mathcal C_{vsp}(V),\{~,~\},\Omega)$ is a vector space $V$, such that $\mathcal C(V)$ is equipped with a Pinczon bracket $\{~,~\}$, and an element $\Omega$ in $\mathcal C_{vsp}(V)$, with degree 3, such that $\{\Omega,\Omega\}=0$.\\

\end{defn}

If $\Omega$ is trilinear, then a commutative Pinczon algebra is simply a quadratic commutative algebra $(V,q,b)$, where $b$ is the symmetric non degenerated form coming from the restriction of the Pinczon bracket to $\left((V[1])^\star\right)^2$, and $q$ is the bilinear mapping associated to $Q$ such that $\Omega=\Omega_Q$.\\

\begin{prop}

\

Let $(\mathcal C_{vsp}(V),\{~,~\},\Omega)$ be a commutative Pinczon algebra, then there is an unique symplectic form $B$ on $V[1]$ and an unique vanishing on shuffle products, $B$-quadratic coderivation $Q$ of $(\underline{\bigotimes}^+(V[1]),\delta)$ such that $\Omega=\Omega_Q$.

Conversely, if $(V,b)$ is a vector space equipped with a non degenerated symmetric bilinear form $b$, any $B$-quadratic structure $Q$ of $C_\infty$ algebra defines an unique commutative Pinczon algebra $(\mathcal C_{vsp}(V),\{~,~\},\Omega_Q)$.\\
\end{prop}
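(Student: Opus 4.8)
The plan is to obtain this statement by transporting the associative machinery of the preceding sections to the Lie subalgebra $\mathcal C_{vsp}(V)$, so that no genuinely new computation is required. For the direct implication I would first extract the symplectic form: by the Theorem of Section 2 the Pinczon bracket carried by $\mathcal C(V)$ (which is part of the data of a commutative Pinczon algebra) is in one-to-one correspondence with a symmetric, non degenerated bilinear form $b$ on $V$, and this determines a unique $B=\eta_2 b$ on $V[1]$, settling existence and uniqueness of $B$. Next, by the Proposition relating the Pinczon bracket to the coderivation bracket, $Q\mapsto\Omega_Q$ is a bijection from $B$-quadratic maps onto $\mathcal C(V)$; applied to the given degree $3$ element $\Omega$ it yields a unique $B$-quadratic $Q$ with $\Omega=\Omega_Q$, and the degree conventions of Section 3 (exactly as in the associative case) force $\deg Q=1$.

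The commutative content then enters through two earlier facts. Since $\Omega=\Omega_Q$ lies in $\mathcal C_{vsp}(V)$, the observation made in the proof of the preceding Proposition shows that $Q$ vanishes on all shuffle products; hence $Q$ descends to $\underline{\otimes}^+V[1]$ and is, by \cite{AAC2,BGHHW}, precisely a coderivation of the Lie cocrochet $\delta$. Applying the Lie algebra isomorphism $\{\Omega_Q,\Omega_{Q'}\}=\Omega_{[Q,Q']}$ with $Q'=Q$ and using injectivity of $Q\mapsto\Omega_Q$, the structure equation $\{\Omega,\Omega\}=0$ is equivalent to $[Q,Q]=0$. Thus $Q$ is a degree $1$, shuffle-vanishing coderivation of $\delta$ with $[Q,Q]=0$, that is, a $B$-quadratic $C_\infty$ structure, and it is unique.

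For the converse I would run the same equivalences backwards. Starting from a non degenerated symmetric $b$ and a $B$-quadratic $C_\infty$ structure $Q$, the form $B=\eta_2 b$ and the associated Pinczon bracket on $\mathcal C(V)$ are fixed by the Section 2 Theorem; then $\Omega_Q$ is cyclic because $Q$ is $B$-quadratic, has degree $3$ because $\deg Q=1$, and vanishes on shuffle products because $Q$ does, so $\Omega_Q\in\mathcal C_{vsp}(V)$. Finally $\{\Omega_Q,\Omega_Q\}=\Omega_{[Q,Q]}=0$, so $(\mathcal C_{vsp}(V),\{~,~\},\Omega_Q)$ is a commutative Pinczon algebra, uniquely determined by $Q$ through the bijection $Q\mapsto\Omega_Q$.

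The one point that is more than bookkeeping is the compatibility of the two coalgebra structures: one must know that $\mathcal C_{vsp}(V)$ is closed under the Pinczon bracket and that, under $Q\mapsto\Omega_Q$, the restriction of the coderivation bracket of $\Delta$ to shuffle-vanishing maps agrees with the bracket of the corresponding coderivations of $\delta$. I expect this to be the only subtle step, but it is already in hand, being supplied by the preceding Proposition (that $\mathcal C_{vsp}(V)$ is a Lie subalgebra) together with the cited identification of coderivations of $\delta$ with shuffle-vanishing maps; everything else is a direct specialization of the associative case.
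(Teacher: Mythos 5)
Your proposal is correct and follows exactly the route the paper intends: the paper states this proposition without a separate proof, treating it as an immediate consequence of the Section~2 Theorem (Pinczon bracket $\leftrightarrow$ $b$, hence $B=\eta_2 b$), the Section~3 Proposition (the bijective Lie isomorphism $Q\mapsto\Omega_Q$, so $\{\Omega,\Omega\}=0\Leftrightarrow[Q,Q]=0$), and the preceding Proposition with its proof's observation that $\Omega_Q\in\mathcal C_{vsp}(V)$ iff $Q$ vanishes on shuffle products, together with the cited identification (\cite{AAC2,BGHHW}) of shuffle-vanishing maps with coderivations of $\delta$. You assemble precisely these ingredients, and you correctly isolate the only delicate point (compatibility of the two coderivation brackets on shuffle-vanishing maps), which the paper likewise delegates to \cite{AAC2}.
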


In other words, the commutative Pinczon algebra $(\mathcal C_{vsp}(V),\{~,~\},\Omega)$ is a structure of quadratic commutative algebra up to homotopy on $V$.\\

\subsection{(Bi)modules and Harrison cohomology}

\

Any module $M$ on a commutative algebra $(V,q)$ can be viewed as a natural bimodule, the right action coinciding with the left one.\\

Let now $(V,q)$ be a commutative algebra, and $M$ a $(V,q)$-bimodule. Let us repeat exactly the preceding construction of the semidirect product $W=V\rtimes M$, with the associative and commutative multiplication:
$$
q_W((x,a),(y,b))=(xy,x\cdot b+a\cdot x).
$$
Consider then the quadratic associative algebra $(\tilde{V},\tilde{b},\tilde{q})$, where $\tilde{V}=W\times W^\star$, $\tilde{q}$ is
$$
\tilde{q}((x,a,g,h),(x',a',g',h'))=(xx',xa'+ax',x\cdot g'+g\cdot x'+a\cdot h'+ h\cdot a',x\cdot h'+h\cdot x'),
$$
and $\tilde b$,
$$
\tilde{b}((x,a,g,h),(x',a',g',h'))=g(x')+h(a')+g'(x)+h'(a).
$$

By construction, $(\tilde{V},\tilde{b},\tilde{q})$ is a quadratic commutative algebra, and:\\

\begin{defn}

\

Let $(V,q)$ be a commutative algebra and $M$ a bimodule. The double semi-direct product of $V$ by $M$ is the quadratic commutative algebra $(\tilde{V},\tilde{b},\tilde{q})$.

The corresponding commutative Pinczon algebra $(\mathcal C_{vsp}(\tilde{V}),\{~,~\}\tilde{~},\Omega_{\tilde{Q}})$ is the Pinczon double semi-direct product of $V$ by $M$.\\
\end{defn}

As above, look now for a $k$-linear mapping $c$ from $V^k$ into $M$, with degree $2-k$ and vanishing on shuffle products, and consider the corresponding map $\tilde{C}$, with degree 1, from $\tilde{V}[1]^k$ into $\tilde{V}[1]$,
$$
\tilde{C}((x_1,a_1,g_1,h_1),\ldots,(x_k,a_k,g_k,h_k))=(0,C(x_1,\ldots,x_k),\sum_{j=1}^kC_j(x_1,\ldots,h_j,\ldots,x_k),0).
$$
We saw that:
$$
[\tilde{Q},\tilde{C}]=\widetilde{[Q,C]}=\widetilde{d_Hc[1]},
$$
where $d_H$ is the Hochschild coboundary operator on the bimodule $M$. But if we restrict ourselves to $(V[1])^{k+1}$, this is:
$$
[\tilde{Q},\tilde{C}]((x_1,0,0,0),\ldots,(x_{k+1},0,0,0))=d_Hc[1](x_1,\ldots,x_{k+1})=d_{Ha}c[1](x_1,\ldots,x_{k+1}),
$$
Therefore, if $d_P$ is the Pinczon coboundary operator:
$$
\{\Omega_{\tilde{Q}},\Omega_{\tilde{C}}\}\tilde{~}=\Omega_{d_P\tilde{C}},
$$
this can be written $d_P\tilde{C}=\widetilde{d_{Ha}c[1]}$.\\

\begin{prop}

\

Let $(V,q)$ be a commutative algebra, and $\Phi:c\mapsto\tilde{C}$ the map associating to any multilinear mapping $c$ from $V^k$ into $M$, with degree $2-k$ and vanishing on skew shuffle products, the vanishing on shuffle products $\tilde{B}$-quadratic application $\tilde{C}$. Then $\Phi$ is a morphism from the Harrison cohomology complex for the $(V,q)$ bimodule $M$ and the Pinczon cohomology complex of cyclic, vanishing on shuffle products forms $\mathcal C_{vsp}(\tilde{V})$ on $\tilde{V}$.\\
\end{prop}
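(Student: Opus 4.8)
The plan is to obtain the commutative statement from the associative one already proved, simply by restricting every object to the shuffle-vanishing world. Recall first that the Harrison cochain complex for the commutative algebra $(V,q)$ with coefficients in $M$ is, by definition, the subcomplex of the Hochschild complex formed by the multilinear maps $c\colon V^k\to M$ that vanish on skew shuffle products, equipped with the coboundary $d_{Ha}$ obtained by restricting the Hochschild coboundary $d_H$. On the Pinczon side the analogous stability is immediate: since $(\tilde V,\tilde b,\tilde q)$ is a \emph{commutative} quadratic algebra, the coderivation $\tilde Q$ vanishes on shuffles, so $\Omega_{\tilde Q}\in\mathcal C_{vsp}(\tilde V)$, and the earlier proposition (that $\mathcal C_{vsp}$ is a Lie subalgebra) shows that $d_P=\{\Omega_{\tilde Q},-\}\tilde{~}$ preserves $\mathcal C_{vsp}(\tilde V)$. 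Thus both $d_{Ha}$ and $d_P$ are well defined on their respective subcomplexes.

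Next I would check that $\Phi$ really maps the Harrison complex into the Pinczon complex $\mathcal C_{vsp}(\tilde V)$, i.e. that $\tilde C$ is cyclic, $\tilde B$-quadratic and vanishing on shuffle products. Cyclicity and $\tilde B$-quadraticity are built into the passage from $C'$ to $\tilde C$ (one cyclicizes $\Omega_{C'}$), exactly as in the associative construction. For the vanishing-on-shuffles condition I would use the degree-shift dictionary recorded earlier: the skew shuffle $x_1\otimes x_2-x_2\otimes x_1$ on $V$ becomes the ordinary shuffle $x_1\otimes x_2+x_2\otimes x_1$ on $V[1]$, so that $c$ vanishing on skew shuffles is equivalent to its shifted coderivation vanishing on shuffles; and the cyclicization producing $\tilde C$ preserves this property by the same computation that shows $\mathcal C_{vsp}$ is stable. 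Hence $\Omega_{\tilde C}\in\mathcal C_{vsp}(\tilde V)$ and $\Phi$ is well defined and linear.

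It then remains to verify the chain-map identity $d_P\tilde C=\widetilde{d_{Ha}c[1]}$. Here I would invoke the associative proposition verbatim: the same $\tilde C$ and $\tilde Q$ give $[\tilde Q,\tilde C]=\widetilde{d_Hc[1]}$, hence $d_P\tilde C=\widetilde{d_Hc[1]}$. Evaluating on arguments drawn from the pure $V[1]$-part (the entries $(x_i,0,0,0)$), and using that $c$, and therefore $d_Hc$, vanishes on skew shuffles, the Hochschild coboundary there coincides with the Harrison coboundary, $d_Hc[1]=d_{Ha}c[1]$. Combining the two gives $d_P\tilde C=\widetilde{d_{Ha}c[1]}$, which is exactly the assertion that $\Phi$ intertwines the differentials; together with the degree count ($c$ of degree $2-k$ yielding a degree-$1$ quadratic map and a degree-$3$ cyclic form), this makes $\Phi$ a morphism of complexes.

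The essential point is that no new heavy computation is needed: the bracket identity $[\tilde Q,\tilde C]=\widetilde{d_Hc[1]}$ is reused unchanged from the associative section. The main obstacle is rather the bookkeeping behind the two compatibility checks: confirming, with correct signs, that the d\'ecalage $V\to V[1]$ turns skew shuffles into shuffles so that Harrison cochains land in $\mathcal C_{vsp}(\tilde V)$, and confirming that $d_H$ stabilizes the skew-shuffle-vanishing subcomplex so that its restriction legitimately equals $d_{Ha}$. The latter is the classical Harrison--Barr lemma; here it can also be read off from the Pinczon side, since stability of $\mathcal C_{vsp}$ under $d_P$ is precisely the shifted, dualized form of that statement.
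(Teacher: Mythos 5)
Your proposal is correct and follows essentially the same route as the paper: it reuses the associative-case identity $[\tilde{Q},\tilde{C}]=\widetilde{d_Hc[1]}$ verbatim, observes that $\tilde{Q}$, $\tilde{C}$ and hence $d_P$ live in the shuffle-vanishing world $\mathcal C_{vsp}(\tilde{V})$ (the Lie-subalgebra proposition), and identifies $d_Hc$ with $d_{Ha}c$ on Harrison cochains to conclude $d_P\tilde{C}=\widetilde{d_{Ha}c[1]}$. Your explicit flagging of the d\'ecalage (skew shuffles on $V$ becoming shuffles on $V[1]$) and of the Harrison--Barr stability is a slightly more careful bookkeeping of points the paper treats as built into the construction, but the argument is the same.
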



\section{Pinczon Lie algebras}


\

\subsection{Quadratic Lie algebras}

\

In this section, we consider the Lie algebra structure. Suppose $(V,q)$ is a (graded) Lie algebra, with $|q|=0$,  then the corresponding Bar resolution consists in replacing the space $\otimes^+V[1]$ of tensors by the subspace $S^+(V[1])$ of totally symmetric tensors, spanned by the symmetric products ($k>0$):
$$
x_1\cdot\ldots\cdot x_k=\sum_{\sigma\in\mathfrak S_k}x_{\sigma^{-1}(1)}\otimes\ldots\otimes x_{\sigma^{-1}(k)}.
$$
Then, consider the natural comultiplication $\Delta$ on $S^+(V[1])$:
$$
\Delta(x_1\cdot\ldots\cdot x_k)=\sum_{\begin{smallmatrix}I\sqcup J=[1,k]\\
0<\#I<k\end{smallmatrix}} x_{\cdot I}\bigotimes x_{\cdot J},
$$
where, if the subset $I$ is $I=\{i_1<\ldots<i_r\}$, $x_{\cdot I}$ means $x_{i_1}\cdot\ldots\cdot x_{i_r}$.

As above, any coderivation $Q$ of the comultiplication $\Delta$ is characterized by its Taylor coefficients $Q_k$, totally symmetric $k$-linear maps from $V[1]^k$ into $V[1]$, these maps are seen as linear maps from $S^k(V[1])$ into $V[1]$.

The bracket of two such coderivations $Q$, $Q'$ becomes:
$$
[Q,Q'](x_1\cdot\ldots\cdot x_{k+k'-1})=\sum_{\begin{smallmatrix}I\sqcup J=[1,k+k'-1]\\
\#J=k'\end{smallmatrix}} Q(Q'(x_{\cdot J})\cdot x_{\cdot I})-\sum_{\begin{smallmatrix}I\sqcup J=[1,k+k'-1]\\
\#I=k\end{smallmatrix}}Q'(Q(x_{\cdot I})\cdot x_{\cdot J}),
$$

Now to the Lie structure $q$ on $V$, there corresponds a totally symmetric bilinear map $Q:S^2(V[1])\longrightarrow V[1]$, thus a degree 1 coderivation, still denoted $Q$, of $\Delta$ and the Jacobi identity for $q$ is equivalent to the relation $[Q,Q]=0$. The corresponding cohomology is the Chevalley cohomology.\\

For any $k$, consider $S^k(V[1])$ as a subspace of $\otimes^k V[1]$, with a projection (up to a numerical factor) $Sym$ from $\otimes^k V[1]$ onto $S^k(V[1])$:
$$
Sym(x_1\otimes\ldots\otimes x_k)=x_1\cdot\ldots\cdot x_k.
$$

Consider now the space $L^k(V[1])$ of $k$-linear maps $Q$ from $V[1]^k$ into $V[1]$. By restriction to $S^k(V[1])$, $Q$ defines a totally symmetric multilinear map:
$$
Q^{Sym}(x_1\cdot\ldots\cdot x_k)=Q\circ Sym(x_1\otimes\ldots\otimes x_k).
$$
Therefore, the space $L(S^k(V[1]),V[1])$ of totally symmetric $k$-linear maps is a quotient of $L^k(V[1])$.\\

Similarly, if $\mathcal C^{k+1}$ is the space of $(k+1)$-linear cyclic forms $\Omega$, by restriction to $S^{k+1}(V[1])$, $\Omega$ defines a totally symmetric multilinear form, denote it $\Omega^{Sym}$:
$$
\Omega^{Sym}(x_1\cdot\ldots\cdot x_{k+1})=\Omega\circ Sym(x_1\otimes\ldots\otimes x_{k+1}).
$$
As above, the space $\mathcal C(V)|_S$ of the restrictions of cyclic multilinear forms to $S^+(V[1])$ is a quotient of $\mathcal C(V)$.\\

Suppose there is a Pinczon bracket $\{~,~\}$ on $\mathcal C(V)$, then any $\Omega$ can be written $\Omega=\Omega_Q$, with $Q$ $k$-linear and $B$-quadratic. But, any element $\sigma$ in $\mathfrak S_{k+1}$ can be written in an unique way as a product $\tau\circ\rho$ where $\tau$ is in $\mathfrak S_k$, viewed as a subgroup of $\mathfrak S_{k+1}$ and $\rho$ in $Cycl$. Therefore, with our notation,
$$
\Omega^{Sym}=(\Omega_Q)^{Sym}=(\Omega_{Q^{Sym}})^{Cycl}=(k+1)\Omega_{Q^{Sym}}.
$$

\begin{prop}

\

The bracket defined on $L(\otimes^+V[1],V[1])$ by the commutator of coderivations in $(\otimes^+V[1],\Delta)$, induces a well defined bracket on the quotient $L(S^+(V[1]),V[1])$, this bracket is the above commutator of coderivations in $(S^+(V[1]),\Delta)$:
$$
\left[Q^{Sym},(Q')^{Sym}\right]=[Q,Q']^{Sym}.
$$

Any Pinczon braket $\{~,~\}$ on the space $\mathcal C(V)$ of multilinear cyclic forms on $V[1]$ induces a well defined bracket on the quotient $\mathcal C(V)|_S$, this bracket denoted $\{~,~\}|$ is:
$$
\left\{\Omega^{Sym},(\Omega')^{Sym}\right\}|=\left(\{\Omega,\Omega'\}\right)^{Sym}=\frac{k+k'}{(k+1)!(k'+1)!} \sum_i\iota_{e_i}\left(\Omega^{Sym}\right)\cdot\iota_{e'_i}\left((\Omega')^{Sym}\right).
$$
\end{prop}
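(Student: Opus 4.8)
The plan is to establish the coderivation identity $[Q^{Sym},(Q')^{Sym}]=[Q,Q']^{Sym}$ first, since it simultaneously yields well-definedness of the induced bracket on $L(S^+(V[1]),V[1])$: its right-hand side manifestly depends only on the restrictions $Q^{Sym},(Q')^{Sym}$. I would prove it by evaluating both sides on a symmetric product $x_1\cdot\ldots\cdot x_n$ with $n=k+k'-1$. First I would write $x_1\cdot\ldots\cdot x_n=\sum_{\pi\in\mathfrak S_n}\varepsilon\,x_{\pi(1)}\otimes\ldots\otimes x_{\pi(n)}$ and apply the tensor insertion formula for $Q\circ Q'$, which inserts $Q'$ on a block of $k'$ consecutive positions. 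Grouping the resulting terms by the \emph{set} $J$ of positions feeding $Q'$, the $k'!$ orderings of that block assemble into $Q'(x_{\cdot J})$, and, summing over the insertion slot $r$ together with the orderings of the remaining $k-1$ elements, the $k!$ arrangements assemble into $Q$ applied to the symmetric product $Q'(x_{\cdot J})\cdot x_{\cdot I}$; the internal symmetrizations of $x_{\cdot I}$ and $x_{\cdot J}$ exactly absorb the permutation sum, so the overall numerical constant is $1$. Carrying out the same regrouping for $Q'\circ Q$ and subtracting reproduces precisely the symmetric commutator, proving the first assertion.

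For the second assertion I would \emph{define} the induced bracket by $\{\Omega^{Sym},(\Omega')^{Sym}\}|:=(\{\Omega,\Omega'\})^{Sym}$ and deduce well-definedness from the first part together with the isomorphism $\{\Omega_Q,\Omega_{Q'}\}=\Omega_{[Q,Q']}$ of the earlier proposition. Writing $\Omega=\Omega_Q$, $\Omega'=\Omega_{Q'}$, and using that $(\Omega_R)^{Sym}=(m+1)\Omega_{R^{Sym}}$ for any $m$-linear $R$ (the same splitting $\sigma=\tau\circ\rho$ used to get $\Omega^{Sym}=(k+1)\Omega_{Q^{Sym}}$), one has, with $R=[Q,Q']$ of arity $k+k'-1$,
$$(\{\Omega,\Omega'\})^{Sym}=(\Omega_{[Q,Q']})^{Sym}=(k+k')\,\Omega_{[Q,Q']^{Sym}}=(k+k')\,\Omega_{[Q^{Sym},(Q')^{Sym}]}.$$
Since $[Q^{Sym},(Q')^{Sym}]$ depends only on $Q^{Sym},(Q')^{Sym}$, and since $\Omega^{Sym}=(k+1)\Omega_{Q^{Sym}}$ together with the non-degeneracy of $B$ (so that $\Omega_P$ determines $P$) recovers $Q^{Sym}$ from $\Omega^{Sym}$, the quantity $(\{\Omega,\Omega'\})^{Sym}$ depends only on $\Omega^{Sym},(\Omega')^{Sym}$. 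This gives both the well-definedness of $\{~,~\}|$ and the first displayed equality.

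It then remains to match $(\{\Omega,\Omega'\})^{Sym}$ with the contraction formula, and I would reduce this to the symmetric analogue of the Pinczon--Ushirobira identity: for totally symmetric maps $P=Q^{Sym}$ ($k$-linear) and $P'=(Q')^{Sym}$ ($k'$-linear),
$$\sum_i\iota_{e_i}\Omega_P\cdot\iota_{e'_i}\Omega_{P'}=k!\,k'!\,\Omega_{[P,P']},$$
where $\cdot$ is the symmetric product of (totally symmetric) forms. Granting this, substituting $\iota_{e_i}(\Omega^{Sym})=(k+1)\iota_{e_i}\Omega_{P}$ and $\iota_{e'_i}((\Omega')^{Sym})=(k'+1)\iota_{e'_i}\Omega_{P'}$ turns the right-hand side of the claimed formula into $(k+1)(k'+1)\,k!\,k'!\,\Omega_{[P,P']}=(k+1)!(k'+1)!\,\Omega_{[P,P']}$, and comparison with $(\{\Omega,\Omega'\})^{Sym}=(k+k')\,\Omega_{[P,P']}$ produces exactly the factor $\frac{k+k'}{(k+1)!(k'+1)!}$. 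The displayed symmetric identity I would prove by mimicking the computation of the earlier proposition in the Chevalley setting: expand each symmetric product as a signed sum over the $\binom{k+k'}{k}$ splittings, glue the two contractions through $\sum_i B(-,e_i)B(-,e'_i)$ into $B$-pairings, and recognize the symmetric insertion formula defining $[P,P']$, the factorials $k!\,k'!$ arising when ordered shuffles are reassembled into the symmetric products $x_{\cdot I},x_{\cdot J}$. The main obstacle throughout is the Koszul sign- and constant-bookkeeping in these regroupings, and in particular checking that the several factorial factors combine precisely into $\frac{k+k'}{(k+1)!(k'+1)!}$; the combinatorial content itself is a routine, if delicate, reorganization of sums over permutations into sums over subsets.
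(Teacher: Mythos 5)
Your proposal is correct and follows essentially the same route as the paper: the first assertion via the regrouping bijection $(r,\sigma)\mapsto(r,\tau,\rho)$ collecting the permutation sum over subsets $J$, and the second via the chain $(\{\Omega,\Omega'\})^{Sym}=(\Omega_{[Q,Q']})^{Sym}=(k+k')\,\Omega_{[Q^{Sym},(Q')^{Sym}]}$ combined with the splitting-sum evaluation $\sum_{I\sqcup J}B\bigl(Q^{Sym}(x_{\cdot I}),(Q')^{Sym}(x_{\cdot J})\bigr)$ and the factorial count $(k+1)!(k'+1)!$ for the contraction expression. The only cosmetic difference is that you package the final comparison as a standalone symmetric Pinczon--Ushirobira identity $\sum_i\iota_{e_i}\Omega_P\cdot\iota_{e'_i}\Omega_{P'}=k!\,k'!\,\Omega_{[P,P']}$ (with consistent constants) where the paper performs the same computation inline.
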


\begin{proof}

The first assertion is a simple computation. Put
$$\aligned
(Q\circ Q')(x_1,\ldots,x_{k+k'-1})&=\sum_{r=1}^{k-1}Q(x_1,\ldots,x_{r-1},Q'(x_r,\ldots,x_{r+k'-1}),x_{r+k'},\ldots,x_{k+k'-1})\\
(Q\circ Q')(x_{[1,k+k'-1]})&=\sum_{r=1}^{k-1}Q(x_{[1,r-1]}\otimes Q'(x_{[r,r+k'-1]})\otimes x_{[r+k',k+k'-1]}).
\endaligned
$$

Then, denoting abusively $x_{\sigma^{-1}([a,b])}$ the quantity $x_{\sigma^{-1}(a)}\otimes\ldots\otimes x_{\sigma^{-1}(b)}$,
$$
(Q\circ Q')^{Sym}(x_{\cdot[1,k+k'-1]})=\sum_{r=1}^{k-1}\sum_{\sigma\in\mathfrak S_{k+k'-1}}Q(x_{\sigma^{-1}([1,r-1])},Q'(x_{\sigma^{-1}([r,r+k'-1])}),x_{\sigma^{-1}([r+k',k+k'-1])})\\
$$

For any subset $J$ in $[1,k+k'-1]$, with $\#J=k'$, for any $r$, put $I=[1,k+k'-1]\setminus J$ and, for any term in the above sum for which $\sigma^{-1}([r,r+k'-1])=J$, define two permutaions $\tau\in\mathfrak S_{k'}$, and $\rho\in\mathfrak S_k$ as follows:
\begin{itemize}
\item[] write $J=\{j_1<\ldots<j_{k'}\}$, then put
$$
j_{\tau^{-1}(t)}=\sigma^{-1}(r+t-1)\quad (1\leq t\leq k'),
$$
\item[] write $\{0\}\cup I=\{i_1<\ldots<i_k\}$, then put
$$
i_{\rho^{-1}(t)}=\sigma^{-1}(t)\quad (1\leq t\leq r-1),\quad i_{\rho^{-1}(r)}=0, \quad i_{\rho^{-1}(t)}=\sigma^{-1}(t-1)\quad (r+1\leq t\leq k).
$$
\end{itemize} 
The correspondence $(r,\sigma)\mapsto(r,\tau,\rho)$ is one-to-one and, suming up, we get:
$$
(Q\circ Q')^{Sym}(x_{\cdot[1,k+k'-1]})=\sum_{\begin{smallmatrix}I\sqcup J=[1,k+k'-1]\\
\#J=k'\end{smallmatrix}}Q^{Sym}((Q')^{Sym}(x_{\cdot J})\cdot x_{\cdot I}).
$$

Therefore, the quotient bracket is well defined and is the bracket of coderivation of $(S^+(V[1]),\Delta)$:
$$
[Q,Q']^{Sym}=(Q\circ Q')^{Sym}-(Q'\circ Q)^{Sym}=\left[Q^{Sym},(Q')^{Sym}\right].
$$

Let now $\{~,~\}$ be a Pinczon bracket on the space $\mathcal C(V)$ of cyclic multilinear forms on $V[1]$. Thus there is a symplectic form $B$ on $V[1]$ and any cyclic form $\Omega$ is written as $\Omega_Q$. Then:
$$
\{\Omega,\Omega'\}^{Sym}=\{\Omega_Q,\Omega_{Q'}\}^{Sym}=\Omega_{[Q,Q']}^{Sym}=(k+k')\Omega_{[Q,Q']^{Sym}}=(k+k')\Omega_{\left[Q^{Sym},(Q')^{Sym}\right]}.
$$
The last bracket is the commutator of coderivations in $S^+(V[1])$, thus
$$\aligned
\{\Omega,\Omega'\}^{Sym}(x_{\cdot[1,k+k']})&=(k+k')\sum_{\begin{smallmatrix}I\sqcup J=[1,k+k'-1]\\ \#J=k'\end{smallmatrix}}B(Q^{Sym}((Q')^{Sym}(x_{\cdot J})\cdot x_{\cdot I}),x_{k+k'})-\\
&\hskip 1cm-(k+k')\sum_{\begin{smallmatrix}I\sqcup J=[1,k+k'-1]\\ \#I=k\end{smallmatrix}}B((Q')^{Sym}(Q^{Sym}(x_{\cdot I})\cdot x_{\cdot J}),x_{k+k'})\\
&=(k+k')\sum_{\begin{smallmatrix}I\sqcup J=[1,k+k']\\ \#J=k'\end{smallmatrix}}B(Q^{Sym}(x_{\cdot I}),(Q')^{Sym}(x_{\cdot J})).
\endaligned
$$

On the other hand,
$$
\iota_{e_i}\left(\Omega_Q^{Sym}\right)(x_{\cdot[1,k]})=(k+1)B(Q^{Sym}(x_{\cdot[1,k]}),e_i)=(k+1)\left(\iota_{e_i}\Omega\right)^{Sym}(x_{\cdot[1,k]}).
$$
Therefore
$$\aligned
\sum_i\iota_{e_i}\left(\Omega_Q^{Sym}\right)\otimes\iota_{e'_i}\left(\Omega_{Q'}^{Sym}\right)(x_{\cdot[1,k+k']})&=(k+1)(k'+1)B(Q^{Sym}(x_{\cdot[1,k]}),(Q')^{Sym}(x_{\cdot[k,k+k']})),\\
\endaligned
$$
and
$$\aligned
\sum_i\iota_{e_i}\left(\Omega_Q^{Sym}\right)\cdot\iota_{e'_i}\left(\Omega_{Q'}^{Sym}\right)(x_{\cdot[1,k+k']})&=\sum_i\left(\iota_{e_i}\left(\Omega_Q^{Sym}\right)\otimes\iota_{e'_i}\left(\Omega_{Q'}^{Sym}\right)\right)^{Sym}(x_{\cdot[1,k+k']})\\
&\hskip-1cm=(k+1)!(k'+1)!\sum_{\begin{smallmatrix}I\sqcup J=[1,k+k']\\ \#J=k'\end{smallmatrix}}B(Q^{Sym}(x_{\cdot I}),(Q')^{Sym}(x_{\cdot J}))\\
\endaligned
$$

This proves that the Pinczon bracket induces a well defined bracket on the quotient $\mathcal C(V)|_S$, and gives the last assertion.\\
\end{proof}

Explicitely, if the forms $\Omega$ and $\Omega'$ are totally symmetric ($\Omega$, $\Omega'$ belong to $\mathcal C(V)|_S$), $\Omega^{Sym}=(k+1)!\Omega$ and the Pinczon bracket on the quotient becomes:
$$
\{\Omega,\Omega'\}|=(k+k')\sum_i\iota_{e_i}\Omega\cdot\iota_{e'_i}\Omega'.
$$
We refind here the expression given in \cite{PU,DPU}.\\

\subsection{Quadratic $L_\infty$ algebras}

\

We are now in position to define the Pinczon Lie algebras:\\

\begin{defn}

\

A Pinczon Lie algebra $(\mathcal C(V)|_S,\{~,~\}|,\Omega)$ is a vector space $V$, such that $\mathcal C(V)$ is equipped with a Pinczon bracket $\{~,~\}$, and a symmetric multilinear form $\Omega$, with degree 3, such that $\{\Omega,\Omega\}|=0$.\\
\end{defn}

Recall that a structure of $L_\infty$ algebra (or Lie algebra up to homotopy) on $V$ is a degree 1 coderivation $Q$ of $(S^+(V[1]),\Delta)$, such that the commutator $[Q,Q]$ vanishes. With the preceding computations, a Pinczon Lie algebra is in fact a quadratic $L_\infty$ algebra.\\

\begin{prop}

\

Let $(\mathcal C(V)|_S,\{~,~\}|,\Omega)$ be a Pinczon Lie algebra, then there is an unique symplectic form $B$ on $V[1]$ and an unique $B$-quadratic coderivation $Q$ of $(S^+(V[1]),\Delta)$ such that $\Omega=\Omega_Q$.

Conversely, if $(V,b)$ is a vector space equipped with a non degenerated symmetric bilinear form $b$, any $B$-quadratic structure $Q$ of $L_\infty$ algebra defines an unique Pinczon Lie algebra $(\mathcal C(V)|_S,\{~,~\}|,\Omega_Q)$.\\
\end{prop}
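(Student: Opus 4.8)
The plan is to mirror exactly the structure already established for the associative and commutative cases, where the key correspondence $\Omega = \Omega_Q$ between degree-$3$ cyclic forms and $B$-quadratic coderivations does all the work. The essential point is that the Proposition immediately preceding the statement has already delivered the crucial bracket identity $\{\Omega,\Omega'\}| = (\{\Omega,\Omega'\})^{Sym}$ and the isomorphism $\{\Omega_Q,\Omega_{Q'}\}=\Omega_{[Q,Q']}$ on $\mathcal C(V)$, together with the relation $[Q,Q']^{Sym}=[Q^{Sym},(Q')^{Sym}]$ identifying the induced bracket with the commutator of coderivations of $(S^+(V[1]),\Delta)$. So the proof is really just an assembly of these facts.

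\emph{First direction.} Given a Pinczon Lie algebra $(\mathcal C(V)|_S,\{~,~\}|,\Omega)$, I would invoke the bijection $Q\mapsto\Omega_Q$ established in the Proposition of Section 3: since $\{~,~\}|$ comes from a genuine Pinczon bracket $\{~,~\}$ on $\mathcal C(V)$, that bracket determines a unique symplectic form $B$ on $V[1]$ (equivalently a unique $b$ on $V$, by the bijection $\mathcal P\leftrightarrow\mathcal B$ of the first Theorem). The symmetric form $\Omega$ of degree $3$ on $V[1]$ lifts through $Sym$, and because $\Omega$ is totally symmetric it is $\Omega^{Sym}=(k+1)!\,\Omega$; writing $\Omega=\Omega_Q$ for a unique $B$-quadratic $Q$ and remembering that the symmetry of $\Omega_Q$ forces $Q$ to factor through $S^+(V[1])$, I conclude $Q$ is a $B$-quadratic coderivation of $(S^+(V[1]),\Delta)$. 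Finally $\{\Omega,\Omega\}|=0$ translates, via $\{\Omega_Q,\Omega_Q\}|=(\Omega_{[Q,Q]})^{Sym}=(k+k)\Omega_{[Q,Q]^{Sym}}$ and the symmetrization identity, into $[Q,Q]^{Sym}=[Q^{Sym},Q^{Sym}]=0$ in the coderivation algebra of $S^+(V[1])$, which is precisely the $L_\infty$ structure equation.

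\emph{Converse.} Starting from $(V,b)$ and a $B$-quadratic $L_\infty$ structure $Q$ on $S^+(V[1])$ with $[Q,Q]=0$, I set $\Omega=\Omega_Q$; this is a totally symmetric degree-$3$ cyclic form, and running the same chain of equalities backward gives $\{\Omega,\Omega\}|=(k+k)\Omega_{[Q,Q]^{Sym}}=0$, so $(\mathcal C(V)|_S,\{~,~\}|,\Omega_Q)$ is a Pinczon Lie algebra, unique by the bijectivity of $Q\mapsto\Omega_Q$.

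The step I expect to require the most care is not any single computation but the bookkeeping of when symmetry of the form $\Omega_Q$ is equivalent to $Q$ being a genuine coderivation of the symmetric coalgebra, i.e.\ the claim that a $B$-quadratic map yields a totally symmetric $\Omega_Q$ exactly when $Q$ descends to $S^+(V[1])$. This was already signalled in Section 3 with the remark that for a Lie bracket $q$ the form $\Omega_Q$ is totally symmetric, but here it must be used in full for all degrees. Once that identification is in hand, the nonzero scalar factors $(k+1)!$ and $(k+k)$ never obstruct the argument, and the proof is a direct transcription of the associative and commutative cases onto the symmetrized quotient.
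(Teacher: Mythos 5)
Your proposal is correct and follows exactly the route the paper intends: the paper gives no separate proof of this proposition, deducing it (``with the preceding computations'') from the bijection between Pinczon brackets and forms $b$, the Lie algebra isomorphism $Q\mapsto\Omega_Q$ with $\{\Omega_Q,\Omega_{Q'}\}=\Omega_{[Q,Q']}$, and the symmetrization identity $[Q,Q']^{Sym}=[Q^{Sym},(Q')^{Sym}]$ — precisely the facts you assemble. Your added care on the key identification (total symmetry of $\Omega_Q$, via nondegeneracy of $B$, is equivalent to $Q$ descending to a coderivation of $(S^+(V[1]),\Delta)$) and your remark that the nonzero scalars $(k+1)!$ and $(k+k')$ are harmless in characteristic $0$ are exactly the points the paper leaves implicit.
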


\subsection{Modules and Chevalley cohomology}

\

Suppose in this section that $(V,q)$ is a Lie algebra ($q(x,y)=[x,y]$), and $M$ a finite dimensional $(V,q)$-module. To refind the corresponding Chevalley coboundary operator $d_{Ch}$, build, as above, the double semidirect product of $V$ by $M$.\\

First, consider the semidirect product $W=V\rtimes M$, that is the vector space $V\times M$, equipped with the Lie bracket:
$$
q_W((x,a),(y,b))=([x,y],x\cdot b-y\cdot a),
$$
since $W$ is a $(W,q_W)$ bimodule, its dual $W^\star$ is also a $(W,q_W)$-module, with:
$$
\big((x,a)\cdot f\big)(z,c)=f(q_W((z,c),(x,a))).
$$

With these objects, define a strucutre of Lie algebra on the space $\tilde{V}=W\times W^\star$, by putting:
$$
\tilde{q}((x,a,g,h),(x',a',g',h'))=([x,x'],x\cdot a'-x'\cdot a,x\cdot g'-x'\cdot g+a\cdot h'-a'\cdot h,x\cdot h'-x'\cdot h),
$$
and a non degenerated symmetric bilinear form $\tilde b$, by
$$
\tilde{b}((x,a,g,h),(x',a',g',h'))=g(x')+h(a')+g'(x)+h'(a).
$$
A direct computation (see \cite{BB,MR}) shows that $(\tilde{V},\tilde{b},\tilde{q})$ is a quadratic Lie algebra,\\

\begin{defn}

\

Let $(V,q)$ be a Lie algebra and $M$ a module. The double semi-direct product of $V$ by $M$ is the quadratic Lie algebra $(\tilde{V},\tilde{b},\tilde{q})$.

The corresponding Pinczon Lie algebra $(\mathcal C(\tilde{V})|_S,\{~,~\}\tilde{~}|,\Omega_{\tilde{Q}})$ is the Pinczon double semi-direct product of $V$ by $M$.\\
\end{defn}

Look now for a $k$-linear mapping $c$ from $V^k$ into $M$, with degree $|c|=2-k$, and totaly skewsymmetric. As above, associate to it the mapping:
$$
\tilde{C}((x_1,a_1,g_1,h_1),\ldots,(x_k,a_k,f_k,g_k))=(0,C(x_1,\ldots,x_k),\sum_{j=1}^kC_j(x_1,\ldots,h_j,\ldots,x_k),0),
$$
with
$$
C_j(x_1,\ldots,h_j,\ldots,x_k)(y)=h_j(C(x_{j+1},\ldots,x_k,y,x_1,\ldots,x_{j-1})).
$$
Clearly, $\tilde{C}$ is totaly symmetric from $\tilde{V}[1]^k$ into $\tilde{V}[1]$. More precisely,
$$\aligned
\widetilde{C^{Sym}}&((x_1,a_1,g_1,h_1),\ldots,(x_k,a_k,g_k,h_k))=\\
&=\sum_{\sigma\in\mathfrak{S}_k}C(x_{\sigma^{-1}(1)},\ldots,x_{\sigma^{-1}(k)})+\sum_{j=1}^kh_{\sigma^{-1}(j)}C(x_{\sigma^{-1}(1)},\ldots,\cdot,\ldots,x_{\sigma^{-1}(k)})\\
&={\tilde{C}}^{Sym}((x_1,a_1,g_1,h_1),\ldots,(x_k,a_k,g_k,h_k)).
\endaligned
$$

Then, if $\tilde{Q}$ is associated to $\tilde{q}$ as above,
$$
[\tilde{Q},\tilde{C}]^{Sym}={\widetilde{[Q,C]}}^{Sym}=\widetilde{[Q,C]^{Sym}}=\widetilde{d_{Ch}c[1]}.
$$
Now, if $d_P$ is the Pinczon coboundary operator, defined by:
$$
\{\Omega_{\tilde{Q}},\Omega_{\tilde{C}}\}|=\Omega_{d_P\tilde{C}},
$$
this can be written $d_P\tilde{C}=(2+k)\widetilde{d_{Ch}c[1]}$.\\

\begin{prop}

\

Let $(V,q)$ be a Lie algebra, and $\Phi:c\mapsto\tilde{C}$ the map associating to any skewsymmetrix multilinear mapping $c$ from $V^k$ into $M$, with degree $2-k$, the symmetric $\tilde{B}$-quadratic application $\tilde{C}$. Then $\Phi$ is a morphism from the Chevalley cohomology complex for the $(V,q)$ bimodule $M$ and the Pinczon cohomology complex of symmetric forms $\mathcal C(\tilde{V})|_S$ on $\tilde{V}$.\\
\end{prop}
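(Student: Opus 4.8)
The plan is to prove that $\Phi$ is a cochain map, i.e. that it intertwines $d_{Ch}$ and $d_P$ up to a nonzero scalar depending only on the cochain degree; this suffices, after a degree-by-degree renormalization, to exhibit a genuine morphism of complexes and in particular an induced map on cohomology.

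First I would verify that $\Phi$ is well defined, namely that for a skewsymmetric $k$-linear $c\colon V^k\to M$ of degree $2-k$ the image $\tilde C$ really lies in $\mathcal C(\tilde V)|_S$. The degree shift $c\mapsto C[1]$ converts the skewsymmetry of $c$ into symmetry of the associated map on $\tilde V[1]$, while the supplementary terms $C_j$ are exactly those introduced so that $\Omega_{\tilde C}=(\Omega_{C'})^{Cycl}$ becomes cyclic; the identity $\widetilde{C^{Sym}}=\tilde C^{Sym}$ recorded above shows that symmetrization commutes with the tilde-construction, so $\tilde C$ is a totally symmetric, $\tilde B$-quadratic coderivation of $(S^+(\tilde V[1]),\Delta)$ and hence a genuine element of $\mathcal C(\tilde V)|_S$.

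The core of the argument is the identity $[\tilde Q,\tilde C]^{Sym}=\widetilde{d_{Ch}c[1]}$. This is the symmetric counterpart of the associative computation already carried out in the three cases $j=1$, $1<j<k+1$ and $j=k+1$: one evaluates $[\tilde Q,\tilde C]$ on a vector $y\in\tilde V[1]$, extracts for each $j$ the part linear in $h_j$, and observes that the Leibniz expansion of the commutator of coderivations reorganizes precisely into $h_j$ applied to the Chevalley coboundary $d_{Ch}c$, the operation $(\,)^{Sym}$ matching the alternation built into $d_{Ch}$ through the shift. I expect this to be the main obstacle, because it requires tracking the Koszul signs across the symmetrization and checking that the module terms of $\tilde q$, together with the dual action on $W^\star$, reproduce exactly the $x\cdot$ and $-x'\cdot$ summands of the Chevalley differential.

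Finally I would combine this with the isomorphism between the symmetrized Pinczon bracket and the commutator of coderivations on $S^+(\tilde V[1])$ proved above. Since $\tilde Q$ is $2$-linear and $\tilde C$ is $k$-linear, that result gives $(\{\Omega_{\tilde Q},\Omega_{\tilde C}\})^{Sym}=\Omega_{[\tilde Q,\tilde C]}^{Sym}=(2+k)\,\Omega_{[\tilde Q,\tilde C]^{Sym}}$, so the definition $\{\Omega_{\tilde Q},\Omega_{\tilde C}\}|=\Omega_{d_P\tilde C}$ yields $d_P\tilde C=(2+k)\,\widetilde{d_{Ch}c[1]}$. As $2+k\neq0$ in characteristic zero, $\Phi$ sends cocycles to cocycles and coboundaries to coboundaries; replacing $\Phi$ on $k$-cochains by $(k+1)!\,\Phi$ absorbs the factor $2+k$ and gives $d_P\circ\Phi=\Phi\circ d_{Ch}$, so that $\Phi$ is a morphism of cochain complexes inducing the announced map from the Chevalley cohomology of the $(V,q)$-module $M$ to the Pinczon cohomology of $\mathcal C(\tilde V)|_S$.
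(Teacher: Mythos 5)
Your proposal is correct and follows essentially the same route as the paper: well-definedness of $\Phi$ via the identity $\widetilde{C^{Sym}}={\tilde{C}}^{Sym}$, the key computation $[\tilde{Q},\tilde{C}]^{Sym}={\widetilde{[Q,C]}}^{Sym}=\widetilde{[Q,C]^{Sym}}=\widetilde{d_{Ch}c[1]}$ obtained by symmetrizing the associative-case analysis of the terms linear in each $h_j$, and the scalar $2+k$ coming from the symmetrized bracket formula $\{\Omega,\Omega'\}^{Sym}=(k+k')\,\Omega_{[Q^{Sym},(Q')^{Sym}]}$, yielding $d_P\tilde{C}=(2+k)\,\widetilde{d_{Ch}c[1]}$ exactly as in the text. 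Your explicit degree-wise renormalization of $\Phi$ by $(k+1)!$ is a correct refinement the paper leaves implicit (it works since $(k+2)!=(2+k)\,(k+1)!$, so the rescaled map satisfies $d_P\circ\Phi=\Phi\circ d_{Ch}$ on the nose rather than up to a degree-dependent nonzero factor).
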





\begin{thebibliography}{11}

\bibitem[AAC1]{AAC1} W. Aloulou, D. Arnal, R. Chatbouri, {\sl Chevalley cohomlogy for linear graphs}, Lett. Math. Phys. vol 80 (2007) 139--154.\\

\bibitem[AAC2]{AAC2}  W. Aloulou, D. Arnal, R. Chatbouri, {\sl Alg\`ebres et cog\`ebres de Gerstenhaberet cohomologie de Chevalley–Harrison} Bull. Sc. Math. Vol 133, 1, (2009), 1–-50.\\

\bibitem[BB]{BB} H. Benamor and S. Benayadi, {\sl Double extension of quadratic Lie superalgebras}, Comm. in Algebra 27 (1999), no. 1, 67--88.\\

\bibitem[BGHHW]{BGHHW} M. Bordemann, G. Ginot, G. Halbout, H.C. Herbig, S. Waldmann, {\sl Formalit\'e $G_{\infty}$ adapt\'ee et star-repr\'esentations sur des sous vari\'et\'es co\"{\i}sotropes}, math.QA/0504276 v 1 13 Apr 2005.\\

\bibitem[D]{D} M. T. Duong, {\sl A New Invariant of Quadratic Lie Algebras and Quadratic Lie Superalgebras} Th\`ese de l'Universit\'e de Bourgogne pr\'esent\'ee le 6 juillet 2011 (168pp).\\

\bibitem[DPU]{DPU} M. T. Duong, G. Pinczon, and R. Ushirobira, {\sl A new invariant of quadratic Lie algebras}, Alg. Rep. Theory, vol 15, 6, pp 1163--1203.\\

\bibitem[K]{K} J-L. Koszul, {\sl Homologie et cohomologie des alg`ebres de Lie}, Bulletin de la S. M. F 78 (1950), 65–-127.\\

\bibitem[MPU]{MPU} I. A. Musson, G. Pinczon, and R. Ushorobira, {\sl Hochschild Cohomology and Deformations of Clifford-Weyl Algebras}, SIGMA 5 (2009), 27 pp.\\

\bibitem[MR]{MR} A. Medina and P. Revoy, {\sl Alg\`ebres de Lie et produit scalaire invariant}, Ann. Sci. Ecole Norm. Sup. 4 (1985), 553-–561.\\

\bibitem[NR]{NR} A. Nijenhuis and R. W. Richardson, {\sl Cohomology and deformations in graded Lie algebras}, Bull. Amer. Math. Soc. 72 (1966), 1-–29.\\

\bibitem[PU]{PU} G. Pinczon and R. Ushirobira, {\sl New Applications of Graded Lie Algebras to Lie Algebras, Generalized Lie Algebras, and Cohomology}, J. Lie Theory 17 (2007), no. 3, 633–-668.\\

\end{thebibliography}
\end{document}